\newtheorem{theorem}{Theorem}[section]
\theoremstyle{definition}
\newtheorem{corollary}[theorem]{Corollary}
\newtheorem{lemma}[theorem]{Lemma}
\theoremstyle{remark}
\numberwithin{equation}{section}
\begin{document}

\title[ Harnack inequality  under integral Ricci curvature bounds]{Harnack inequality for   nonlinear parabolic equations under integral Ricci curvature bounds }

\author{Shahroud Azami}
\address{Department of  Pure Mathematics, Faculty of Science,
Imam Khomeini International University,
Qazvin, Iran. \\
              Tel.: +98-28-33901321\\
              Fax: +98-28-33780083\\
             }

\email{azami@sci.ikiu.ac.ir}
%
%

\subjclass[2020]{53C21, 35K55, 35B45, 53E20}



\keywords{Gradient estimate, Harnack inequality, Parabolic equation, Ricci flow}
\begin{abstract}
Let $(M^{n},g)$  be a complete  Riemannian manifold.  In this paper, we establish   a  space-time  gradient estimates   for positive solutions of   nonlinear parabolic  equations
$$\partial_{t}u(x,t)=\Delta u(x,t)+a u(x,t)(\log u(x,t))^{b}+q(x,t)A(u(x,t)),$$
on geodesic balls $B(O,r)$ in $M$ with $0<r\leq r$ for $p>\frac{n}{2}$ when integral Ricci curvature $k(p,1)$ is small enough.
 By integrating the gradient estimates, we find the corresponding Harnack inequalities.
\end{abstract}

\maketitle

\section{Introduction}
For $x\in M$ let $\rho(x)$ be the smallest eigenvalue for the Ricci tensor  $Ric:T_{x}M\to T_{x}M$, we define $Ric_{-}(x)=\max\{0,-\rho(x)\}$. Let $B(x,r)$ is denotes the geodesic ball with radius $r$ in $M$ centered at $x$. Then the following \cite{DWZ}, for any  constants $p,r>0$, consider $$k(x,p,r)=r^{2}\left(\fint_{B(x,r)}|Ric_{-}|^{p}d\mu\right)^{\frac{1}{p}},\qquad k(p,r)=\mathop{\sup}\limits_{x\in M}k(x,p,r).$$

The gradient estimates are important tools in geometric analysis. They  were originated by Yau \cite{YA} and Li-Yau \cite{LY}. Li and Yau \cite{LY}, established parabolic gradient estimates on solutions $u$ to the heat equation $u_{t}=\Delta u$ on complete Riemannian manifold $(M^{n},g)$ with a fixed metric and Ricci curvature bounded from below by $-K$, where $K\geq0$, in fact they  proved
\begin{equation}\label{ly}
\frac{|\nabla u|^{2}}{u^{2}}-\alpha \frac{u_{t}}{u}\leq \frac{n\alpha^{2}K}{2(\alpha-1)}+\frac{n\alpha^{2}}{2t}
\end{equation}
where $\alpha>1$ is a constant.
This estimate  called the Li-Yau estimate.
In  \cite{RH}, Hamilton generalized the constant $\alpha$  in inequality (\ref{ly})  to the function $\alpha(t)=e^{2Kt}$, and obtained the following inequality
\begin{equation*}
\frac{|\nabla u|^{2}}{u^{2}}-e^{2Kt} \frac{u_{t}}{u}\leq e^{4Kt}\frac{n}{2t}.
\end{equation*}
This type gradient estimate is called Hamilton type estimate. In 2006,  Sun \cite{SU} obtained another type estimate for positive solution to the equation $u_{t}=\Delta u$ as follows
\begin{equation*}
\frac{|\nabla u|^{2}}{u^{2}}-(1+\frac{2}{3}Kt) \frac{u_{t}}{u}\leq \left(\frac{n}{2t}+ \frac{nK}{4}\right)(1+\frac{2}{3}Kt)-\frac{nK}{12}.
\end{equation*}
Then, in   \cite{Jli}, Li and Xu derived gradient estimates for this positive solution to  $u_{t}=\Delta u$  as follows
\begin{equation*}
\frac{|\nabla u|^{2}}{u^{2}}-\Big(1+\frac{\sinh (Kt)\cosh(Kt)-Kt}{\sinh^{2}(Kt)}  \Big) \frac{u_{t}}{u}\leq \frac{nK}{2}(1+\coth (Kt)),
\end{equation*}
and its linearized version \cite{Dba},
\begin{equation*}
\frac{|\nabla u|^{2}}{u^{2}}-(1+\frac{2}{3}Kt) \frac{u_{t}}{u}\leq \frac{n}{2}(\frac{1}{t}+K+\frac{1}{3}K^{2}t).
\end{equation*}
In \cite{YAN}, Yang  generalized Ma's results \cite{MA} and proved local gradient estimates for positive solution  to the equation $u_{t}=\Delta u +au\log u+bu$ on complete noncompact manifolds with a fixed metric and curvature locally bounded below, where $a,b$ are real constants. Then Abolarinwa  et al. \cite{ABO} obtained elliptic gradient estimates for positive solutions to a weighted nonlinear parabolic equation $u_{t}=\Delta_{f}u-pu^{\beta}-qu$ on a weighted Riemannian manifold, where $\beta\in\mathbb{R}$ and $p$ and $q$ are smooth functions.
In \cite{ZZ, ZZ1} Zhang and Zhu derived the Li-Yau type gradient bounds for any positive solution $u$ to the heat equation $u_{t}=\Delta u$ under the integral Ricci curvature bounds introduced by Peterson and Wei \cite{P, WE},
\begin{equation}\label{icb}k(p,r)=\mathop{\sup}\limits_{x\in M}r^{2}\left(\fint_{B(x,r)}|Ric_{-}|^{p}d\mu\right)^{\frac{1}{p}}\leq \kappa
\end{equation}
as follows
$$\alpha\bar{J}\frac{\nabla u|^{2}}{u^{2}}-\frac{u_{t}}{u}\leq \frac{C_{1}}{\bar{J}}(1+\frac{C_{2}}{\bar{J}})+\frac{C_{3}}{t\bar{J}},$$
where $C_{i}=C_{i}(n,p,\alpha)$, $i=1,2,3$ and $\bar{J}=\bar{J}(t)$ is a decreasing exponential function.  Then, Rose \cite{RO} generalized the above inequality  to compact  Riemannian manifolds with negative parts of Ricci curvature in the Kato class. Also, Oliv\'{v} \cite{OL} generalized the results of   Zhang and Zhu \cite{ZZ, ZZ1} to   the positive solution to $u_{t}=\Delta u$ with Neumann boundary conditions  on a compact Riemannian submanifold with boundary $M^{n}\subset N$ satisfying the integral Ricci curvature assumption
$$D^{2}\mathop{\sup}\limits_{x\in N}r^{2}\left(\fint_{B(x,r)}|Ric_{-}|^{p}d\mu\right)^{\frac{1}{p}}\leq K$$
for $k(n,p)$ small enough, $p>\frac{n}{2}$, where ${\rm diam}(M)\leq D$.  Recently, Wang \cite{WW} derived the gradient estimates for any positive solution to the nonlinear parabolic equation $u_{t}=\Delta u+ au\log u$ under integral Curvature bounds.  \\
Motivated by the above works,
in this paper, we  will  derive some gradient estimates for  positive solutions of the  nonlinear parabolic equation
\begin{equation}\label{1}
u_{t}(x,t)=\Delta u(x,y)+au(x,t)(\log u(x,t))^{b}+q(x,t)A(u(x,t))
\end{equation}
on a complete Riemannian manifold $(M^{n},g)$ under  the integral Ricci curvature (\ref{icb}), where $q(x,t)$ is a function on of $C^{2}$, $A(u)$ is a function  of $C^{2}$ in $u$, and $a,b$ are real constants. \\
Let $f=\log u$, then
\begin{equation}\label{2}
(\Delta-\partial_{t})f=-|\nabla f|^{2}-af^{b}-qB(f),
\end{equation}
where $B(f)=\frac{A(u)}{u}$.
Then
 \begin{equation*}
B_{f}=A'(u)-\frac{A(u)}{u},\,\,\,\,\,B_{ff}=uA''(u)-A'(u)+\frac{A(u)}{u}.
\end{equation*}
Moreover, for $u>0$ we define several nonnegative real constants as follows:
\begin{eqnarray*}
&&\lambda_{1}:=\mathop{\sup}\limits_{B(O,\frac{1}{2})\times(0,\infty)}|B|,\qquad\qquad\quad\quad\,\,\theta_{1}:=\mathop{\sup}\limits_{B(O,\frac{1}{2})\times(0,\infty)}|q|,\\
&&\lambda_{2}:=\mathop{\sup}\limits_{B(O,\frac{1}{2})\times(0,\infty)}|B_{f}|,\qquad\qquad\quad\,\,\theta_{2}:=\mathop{\sup}\limits_{B(O,\frac{1}{2})\times(0,\infty)}|\nabla q|,\\
&&\lambda_{3}:=\mathop{\sup}\limits_{B(O,\frac{1}{2})\times(0,\infty)}|B_{ff}|,\qquad\qquad\quad\,\,\theta_{3}:=\mathop{\sup}\limits_{B(O,\frac{1}{2})\times(0,\infty)}|\Delta q|.
\end{eqnarray*}
Since we assume $u$   be  positive solution on compact Riemannian  manifold $M$, we can consider  ${D}_{1}\leq u(x,t)\leq {D}_{2}$  for some positive constants ${D}_{1}$ and ${D}_{2}$. For simplicity, we define  $\tilde{D}_{i}=\max\{|(\log \tilde{D}_{1})^{b-i}|, |(\log \tilde{D}_{2})^{b-i}|\}$ for $i=0,1,2$.\\
For prove of our results we need the following Lemmas. From \cite{ZZ} we have the following lemma.
\begin{lemma}\label{le}
There exists a $\kappa=\kappa(n,p)$ such that when $k(p,1)\leq \kappa$ for any $0<r\leq 1$, the system
\begin{equation}
\begin{cases}
\Delta J-2VJ-5\frac{|\nabla J|^{2}}{\epsilon J}-J_{t}=0,&\text{on}\quad B(O,r)\times(0,\infty),\\
J(.,0)=1,&\text{on}\quad B(O,r),\\
J(.,t)=1, &\text{on}\quad \partial B(O,r),
\end{cases}
\end{equation}
has  a unique solution for $t\in[0,\infty)$, which satisfies  $\bar{J}_{r}(t)\leq J(x,t)\leq1$, where
$$
\bar{J}_{r}(t)=2^{-\frac{1}{b-1}}\exp\{-2C\kappa r^{-2}(1+[2C(b-1)\kappa]^{\frac{n}{2p-n}})t\}
$$
for some constant $C=C(n,p)$ and $b=\frac{5}{\epsilon}$.
\end{lemma}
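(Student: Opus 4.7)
The plan is to linearize the quasilinear PDE via a Cole--Hopf type substitution and then apply standard linear parabolic theory combined with a Moser iteration tailored to the integral Ricci curvature assumption. Specifically, set $J=v^{a}$ with $a=-\tfrac{1}{b-1}$ and $b=\tfrac{5}{\epsilon}$. Expanding $\Delta v^{a}$ and $\partial_{t}v^{a}$ and dividing through by $av^{a-1}$, the equation for $J$ transforms into
\begin{equation*}
v_{t}=\Delta v+\Bigl(a-1-\tfrac{5a}{\epsilon}\Bigr)\frac{|\nabla v|^{2}}{v}-\frac{2Vv}{a}.
\end{equation*}
The chosen $a$ annihilates the coefficient of $|\nabla v|^{2}/v$ and produces the linear parabolic problem $v_{t}=\Delta v+2(b-1)Vv$ on $B(O,r)\times(0,\infty)$ with $v\equiv 1$ on the parabolic boundary. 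Existence and uniqueness of a classical solution on $[0,\infty)$ are then standard, and since $V\ge 0$ the comparison principle applied to $w:=v-1$ (which satisfies $w_{t}\ge\Delta w$ with zero parabolic data) forces $v\ge 1$, equivalently $J=v^{-1/(b-1)}\le 1$.

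The substantive step is the upper bound on $v$. I would run a parabolic Moser iteration on $v_{t}=\Delta v+2(b-1)Vv$ using two inputs: the bound $\fint_{B(O,1)}V^{p}\le\kappa^{p}$ provided by $k(p,1)\le\kappa$, and the local Sobolev inequality, which is available on $B(O,1)$ under the same smallness assumption once $\kappa$ is taken sufficiently small. The hypothesis $p>\tfrac{n}{2}$ is exactly what makes the potential $2(b-1)V$ subcritical with respect to the Sobolev exponent, so that it can be absorbed into the Dirichlet energy at each iteration step. Carrying out the iteration and tracking constants -- this is where the fractional exponent $\tfrac{n}{2p-n}$ arises from balancing the scaling of $V$ against that of the Laplacian -- yields
\begin{equation*}
v(x,t)\le 2\exp\bigl\{2C\kappa(b-1)r^{-2}\bigl(1+[2C(b-1)\kappa]^{\frac{n}{2p-n}}\bigr)t\bigr\}
\end{equation*}
for some $C=C(n,p)$.

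Inverting via $J=v^{-1/(b-1)}$ turns the upper bound on $v$ into the asserted lower bound
\begin{equation*}
J(x,t)\ge 2^{-\frac{1}{b-1}}\exp\bigl\{-2C\kappa r^{-2}\bigl(1+[2C(b-1)\kappa]^{\frac{n}{2p-n}}\bigr)t\bigr\},
\end{equation*}
the factor $(b-1)$ in the exponent of $v$ cancelling against $\tfrac{1}{b-1}$ from the inverse transformation. The main obstacle is the Moser iteration: the Cole--Hopf reduction and the inequality $J\le 1$ are essentially algebraic manipulations, but extracting the explicit form of $\bar{J}_{r}(t)$ requires the careful bookkeeping of Sobolev and smallness constants carried out by Zhang and Zhu in \cite{ZZ}, which I would follow verbatim.
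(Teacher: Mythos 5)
The paper does not prove this lemma; it simply cites it from Zhang--Zhu \cite{ZZ}. Your outline does match the structure of the Zhang--Zhu argument: Cole--Hopf linearization $J=v^{-1/(b-1)}$ reducing to $v_{t}=\Delta v+2(b-1)Vv$, a comparison principle giving $v\ge 1$ (hence $J\le 1$), and parabolic Moser iteration against the $L^{p}$ bound on $V$ and the local Sobolev inequality to bound $v$ from above, which after inversion yields $\bar J_{r}(t)\le J$. The algebra of the substitution checks out: with $a=-1/(b-1)$ and $b=5/\epsilon$ the gradient term drops and $-2V/a=2(b-1)V$ as you state, and the powers of $2C(b-1)\kappa$ and the $1/(b-1)$ cancellation in the final inversion are consistent with the stated $\bar J_{r}$.

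One imprecision worth flagging: your comparison argument asserts that $w:=v-1$ satisfies $w_{t}\ge\Delta w$, but in fact $w_{t}-\Delta w=2(b-1)V\,w+2(b-1)V$, so the correct statement is $w_{t}-\Delta w-2(b-1)Vw=2(b-1)V\ge 0$ with $w=0$ on the parabolic boundary; the conclusion $w\ge 0$ then follows from the maximum principle for the operator $\partial_{t}-\Delta-2(b-1)V$, not from the heat operator alone. Since $V=|Ric_{-}|$ is only assumed in $L^{p}$, one should also note (as Zhang--Zhu do) that the maximum principle with this unbounded zero-order coefficient requires a short justification, e.g.\ via truncation or via the same Moser-type energy estimate. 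With that caveat, the proposal is a faithful sketch of the cited proof.
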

We have the following lemma from \cite{DWZ}.
\begin{lemma}\label{l2}
 Let $(M^{n},g)$ be a complete Riemannian manifold. For any $p>\frac{n}{2}$, there exists constants $k=k(p,n)$ and $C=C(p,n)$ such that if $k(p,1)\leq \kappa$, then for any geodesic ball $B(x,r)$ and $0<r\leq1$ there exists $\psi\in C_{0}^{\infty}((B(x,r))$ satisfying $0\leq \psi\leq1$, $\psi\equiv1$ in $B(x,\frac{r}{2})$, and $|\nabla \psi|^{2}+|\Delta \psi|\leq \frac{C}{r^{2}}$.
\end{lemma}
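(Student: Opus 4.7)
The plan is to build $\psi$ as a smoothed radial function of $d_{x}(y)=d(x,y)$, overcoming the absence of pointwise Laplacian control on $d_{x}$ via a Petersen--Wei type integral Laplacian comparison together with the uniform Sobolev inequality established in the Dai--Wei--Zhang framework under $k(p,1)\leq\kappa$. First I would fix a profile $\eta\in C^{\infty}(\mathbb{R})$ with $\eta\equiv1$ on $(-\infty,1/2]$, $\eta\equiv0$ on $[1,\infty)$, and $|\eta'|+|\eta''|\leq C_{0}$, and consider the naive candidate $\varphi(y)=\eta(d_{x}(y)/r)$. This gives $|\nabla\varphi|\leq C_{0}/r$ almost everywhere, but the Laplacian $\Delta\varphi$ contains the term $r^{-1}\eta'(d_{x}/r)\Delta d_{x}$, which under the integral hypothesis is only controlled in $L^{2p}$ through $\|(\Delta d_{x})_{+}\|_{L^{2p}(B(x,1))}^{2p}\lesssim k(p,1)$, and not pointwise.

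Second, to upgrade to a pointwise estimate I would replace $\varphi$ by the solution of the Dirichlet problem on the annulus $A=B(x,r)\setminus\overline{B(x,r/2)}$ with boundary values $1$ on $\partial B(x,r/2)$ and $0$ on $\partial B(x,r)$, extended by $1$ inside $B(x,r/2)$ and by $0$ outside $B(x,r)$, so that $0\leq\psi\leq1$ is automatic. Existence and the energy estimate $\int_{A}|\nabla\psi|^{2}\leq Cr^{n-2}$ follow from the uniform Poincar\'e/Sobolev inequality on $B(x,r)$; a final mollification across the two spherical boundaries then places $\psi\in C_{0}^{\infty}(B(x,r))$ without destroying the energy bounds.

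Third, for the pointwise bound $|\nabla\psi|^{2}+|\Delta\psi|\leq C/r^{2}$ I would run a Moser iteration on $|\nabla\psi|^{2}$ driven by the Bochner formula
\begin{equation*}
\tfrac{1}{2}\Delta|\nabla\psi|^{2}=|\mathrm{Hess}\,\psi|^{2}+\langle\nabla\Delta\psi,\nabla\psi\rangle+\mathrm{Ric}(\nabla\psi,\nabla\psi).
\end{equation*}
The curvature term is estimated by $|Ric_{-}||\nabla\psi|^{2}$ with $|Ric_{-}|\in L^{p}$ on scale $r$, so it enters as a subcritical perturbation of the Sobolev inequality when $p>n/2$. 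Standard Moser iteration then yields $\|\nabla\psi\|_{L^{\infty}}^{2}\leq C/r^{2}$, and elliptic regularity applied to the defining Dirichlet problem converts this into the claimed $L^{\infty}$ bound on $\Delta\psi$.

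The main obstacle is closing the Moser iteration despite having only integral control on $Ric_{-}$: one needs a Sobolev inequality on $B(x,r)$ whose constant depends only on $(n,p)$, not on the ambient geometry, which is exactly the technical input one extracts from $k(p,1)\leq\kappa$ in the Dai--Wei--Zhang analysis. The assumption $p>n/2$ is critical here, since it places $|Ric_{-}|$ in a subcritical Lebesgue class relative to the Sobolev exponent, and smallness of $\kappa$ is what ensures the curvature perturbation is absorbed into the Sobolev term rather than causing the iteration to diverge.
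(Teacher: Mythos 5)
The paper does not prove this lemma; it cites it directly from Dai--Wei--Zhang, so there is no in-text proof to compare against, and I evaluate your sketch against the construction that reference uses. Your broad strategy (build the cutoff through an auxiliary elliptic problem and extract pointwise gradient control from Moser iteration against the Sobolev constant that $k(p,1)\leq\kappa$ supplies) is in the right spirit, and you correctly identify why the naive radial cutoff fails: $\Delta d_{x}$ is controlled only in $L^{2p}$, not pointwise. But the specific construction in your second step has a genuine gap. Solving the Dirichlet problem for the Laplacian on the annulus $A=B(x,r)\setminus\overline{B(x,r/2)}$ and extending by $1$ and $0$ gives a Lipschitz function whose distributional Laplacian contains singular measures supported on $\partial B(x,r/2)$ and $\partial B(x,r)$, with densities involving the jump in normal derivative and the mean curvature of those spheres. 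Mollification produces a smooth function, but its Laplacian near the spheres is then governed by exactly these surface terms, and the mean curvature $\Delta d_{x}$ is precisely the quantity you already observed is not pointwise bounded under an integral Ricci hypothesis. So the bound $|\Delta\psi|\leq C/r^{2}$ does not survive the smoothing. The closing remark that ``elliptic regularity applied to the defining Dirichlet problem converts this into the claimed $L^{\infty}$ bound on $\Delta\psi$'' is also vacuous: the harmonic function has $\Delta\psi=0$ on the annulus where the problem is posed, which says nothing about the Laplacian of the mollified function at the spheres, which is where the difficulty actually sits.

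The standard fix---the Cheeger--Colding construction as adapted to integral curvature in Dai--Wei--Zhang---is to solve a Poisson equation with constant source, $\Delta u=c(n)$, on the full ball $B(x,r)$ with $u=0$ on $\partial B(x,r)$, obtain two-sided comparison of $u$ with $r^{2}-d(x,\cdot)^{2}$ from the $L^{p}$ Laplacian comparison, and obtain $|\nabla u|\leq Cr$ from a Cheng--Yau-type gradient estimate. That gradient estimate is precisely where the Moser iteration you describe, run against the uniform Sobolev inequality and the subcritical $L^{p}$ control on $|Ric_{-}|$ with $p>n/2$, actually earns its keep, so your third paragraph is aimed at the right tool but the wrong function. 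One then sets $\psi=\phi(u/r^{2})$ for a fixed smooth profile $\phi$ equal to $1$ near $1$ and vanishing near $0$. Then $\psi\in C_{0}^{\infty}(B(x,r))$ automatically, $|\nabla\psi|=r^{-2}|\phi'|\,|\nabla u|\leq C/r$, and $\Delta\psi=r^{-2}\phi'(u/r^{2})\,c(n)+r^{-4}\phi''(u/r^{2})|\nabla u|^{2}$ is bounded by $C/r^{2}$ pointwise, with no mollification and no reliance on pointwise control of the geometry of geodesic spheres. Your annulus Dirichlet problem and post-hoc smoothing should be replaced by this Poisson-plus-profile construction.
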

Similar to \cite{WW}, we introduce two $C^{1}$ functions $\alpha$ and $\phi:[0,\infty)\to (0,\infty)$. Let there exists two positive real numbers $\epsilon$ and $\lambda$ such that  these constants and  functions  $\alpha$ and $\phi$ satisfy the following conditions\\
\begin{itemize}
\item[(A1)] $\alpha(t)>1$,
\item[(A2)] $2(2-\epsilon)\frac{\phi}{n}\geq \left[2(2-\epsilon)\frac{\phi}{n}-\alpha'\right]\frac{1}{\alpha}$,
\item[(A3)] $2(2-\epsilon)\frac{\phi}{n}\bar{J}-\alpha'\geq0$,
\item[(A4)] $\alpha\phi'+(2-\epsilon)\frac{\phi{2}}{n}\bar{J}\geq0$,
\item[(A5)] $0<\frac{\alpha}{\alpha-1}\leq\lambda$,
\item[(A6)] $0<\delta \leq \frac{2}{1+n\lambda^{2}}$,
\end{itemize}
where $\alpha'=\frac{d}{dt}\alpha$ and $\phi'=\frac{d}{dt}\phi$.
Firstly, we give the following gradient estimates  for positive solutions to (\ref{1}) under integral curvature bound (\ref{icb}).
\begin{theorem}\label{t1}
Let $(M,g)$ be a $n$-dimensional complete Riemannain manifold. Suppose that $u$ be a positive  solution to (\ref{1}) on $M$.  Let there exists two positive real numbers $\epsilon$ and $\lambda$  and  two functions  $\alpha$ and $\phi$ satisfy  conditions $(A1)-(A6)$. For $p>\frac{n}{2}$, there exists  a constant $\kappa=\kappa(n,p)$ such that $k(p,1)\leq \kappa$, then for any point $O\in M$,  there exists positive constant $C_{1}$ such that
\begin{eqnarray}\nonumber
F&\leq& \frac{n\alpha^{2}}{2(2-\epsilon)  \bar{J}}\frac{1}{t}+ \frac{n\alpha^{2}}{(2-\epsilon)  \bar{J}}\left[\frac{C_{1}^{2}n\alpha^{2}}{2(2-\epsilon)  (\alpha-1)\bar{J}}+b\tilde{D}_{1}|a|+\theta_{1}\lambda_{2}+C_{1}\right]\\\label{f1}&&+\sqrt{ \frac{n\alpha^{2}}{(2-\epsilon)  \bar{J}}C_{2}}
\end{eqnarray} where $C_{1}=C(n,p)$ and
\begin{eqnarray}\nonumber
C_{2}&=&\frac{\left(|a|(\alpha-1)b\tilde{D}_{1}+5(\alpha-1)\theta_{1}\lambda_{2}+|a|\alpha b(b-1)\tilde{D}_{2}
+3(\alpha-1)\theta_{1}\lambda_{3}
\right)^{2}}{4\left[ \frac{(2-\epsilon)  J}{n\alpha^{2}}(\bar{J}-\alpha)^{2}-\epsilon\right]}\\&&
+
 \frac{1}{2\theta_{1}\lambda_{2}}\lambda_{1}\theta_{2}+|a|b\tilde{D}_{1}\alpha \phi
+\alpha \lambda_{1}\theta_{3}+\theta_{1}\lambda_{2}\alpha\phi+\frac{\alpha}{2\theta_{1}\lambda_{3}}\lambda_{2}\theta_{2}.
\end{eqnarray}
 The  two positive real numbers $\epsilon$ and $\lambda$  and  two functions  $\alpha$ and $\phi$ qre the following
\begin{itemize}
\item[1.] Li-Yau  type:
\begin{eqnarray*}
&&\alpha(t)= \text{constant}>1,\qquad \phi(t)=\frac{n}{(2-\epsilon)\alpha\bar{J}},\\
&&0<\epsilon\leq\frac{2(\alpha-1)^{2}}{(\alpha-1)^{2}+n\alpha^{2}}.
\end{eqnarray*}
\item[2.] Hamilton type:
\begin{eqnarray*}
&&\alpha(t)= \delta e^{\kappa t},\qquad \phi(t)=\frac{\delta \kappa n e^{\kappa t}}{2(2-\epsilon)\bar{J}},\\
&&0<\epsilon\leq\frac{2(\delta-1)^{2}}{(\delta-1)^{2}+n\delta^{2}}\quad\text{for any}\quad \delta>1.
\end{eqnarray*}
\item[3.] Li-Xu type:
\begin{eqnarray*}
&&\alpha(t)= \delta+\frac{\sinh (\kappa t)\cosh(\kappa t)-\kappa t}{\sinh(\kappa t)},\quad\phi(t)=\frac{n\kappa }{(2-\epsilon)\bar{J}}(\delta+\coth (\kappa t)),\\
&&0<\epsilon<\frac{\delta}{\delta-1}\quad\text{for any}\quad \delta>1.
\end{eqnarray*}
\item[4.] Linear Li-Xu type:
\begin{eqnarray*}
&&\alpha(t)= \delta+\kappa t,\qquad \phi(t)=\frac{n\kappa }{2(2-\epsilon)\bar{J}},\\
&&0<\epsilon\leq\frac{2(\delta-1)^{2}}{(\delta-1)^{2}+n\delta^{2}}\quad\text{for any}\quad \delta>1.
\end{eqnarray*}
\end{itemize}
\end{theorem}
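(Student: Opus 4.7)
\textbf{Proof plan for Theorem \ref{t1}.} The plan is to adapt the Zhang--Zhu auxiliary function technique to the nonlinear equation (\ref{1}). First I would set $f=\log u$ so that (\ref{2}) holds, let $w=|\nabla f|^2$, and introduce the quantity
\begin{equation*}
F \;=\; J\bigl(\alpha\,w - f_t\bigr) - \phi,
\end{equation*}
where $J$ is the solution of the auxiliary system from Lemma \ref{le} (with $2V$ absorbing the Ricci lower-bound term). The central calculation is Bochner's formula
\begin{equation*}
\tfrac{1}{2}\bigl(\Delta-\partial_t\bigr)w \;=\; |\nabla^2 f|^2 + \langle\nabla f,\nabla(\Delta f - f_t)\rangle + \mathrm{Ric}(\nabla f,\nabla f),
\end{equation*}
combined with $\Delta f - f_t = -w - af^b - qB(f)$ from (\ref{2}). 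The Ricci term, which is only controlled in the integral sense, is cancelled by using the defining PDE for $J$: the identity $\Delta J - 2VJ - 5|\nabla J|^2/(\epsilon J) - J_t = 0$ is tailored precisely so that the bad $-2V|\nabla f|^2$ contribution to $(\Delta-\partial_t)(Jw)$ is absorbed, at the cost of an $\epsilon|\nabla f|^4$ remainder (the source of the factor $(2-\epsilon)$ throughout).

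Next I would substitute the nonlinear source terms $af^b$ and $qB(f)$. Differentiating these produces derivatives that are bounded using the $\lambda_i,\theta_i,\tilde D_i$ quantities: $|\nabla(af^b)| \lesssim |a|\,b\,\tilde D_1\,|\nabla f|$, $|\Delta(qB(f))| \lesssim \theta_3\lambda_1 + \theta_2\lambda_2|\nabla f| + \theta_1\lambda_3 w + \theta_1\lambda_2|\Delta f|$, and similarly for the cross terms that arise when computing $\langle \nabla f,\nabla(\Delta f-f_t)\rangle$. Absorbing the linear-in-$|\nabla f|$ terms by Cauchy--Schwarz (which accounts for the $1/(2\theta_1\lambda_i)$ denominators in $C_2$) and using $|\nabla^2 f|^2 \geq (\Delta f)^2/n$, one arrives at a pointwise inequality of the form
\begin{equation*}
(\Delta-\partial_t)F \;\geq\; \tfrac{2(2-\epsilon)}{n\alpha^2}\,\bar J\,F^2 \;-\; 2\langle \nabla f,\nabla F\rangle \;-\; (\text{lower-order in } F) \;-\; C_2,
\end{equation*}
where the coefficient of $F^2$ comes from the Li--Yau trick applied to $(\Delta f)^2=(\alpha w - F/J - \phi/J)^2$, and the conditions (A2)--(A6) are precisely what is needed to keep the resulting quadratic coefficient nonnegative after absorbing the $\alpha',\phi'$ contributions from $-\partial_t$.

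Then I would localize by considering $\psi F$ where $\psi$ is the cutoff function on $B(O,1)$ from Lemma \ref{l2} (with $\psi\equiv 1$ on $B(O,1/2)$). At an interior maximum $(x_0,t_0)$ of $\psi F$ (which one may assume is positive and attained in $B(O,1/2)\times(0,T]$, otherwise the estimate is trivial), the standard maximum-principle inequalities $\nabla(\psi F)=0$, $\Delta(\psi F)\leq 0$, $\partial_t(\psi F)\geq 0$ together with $|\nabla\psi|^2+|\Delta\psi|\leq C_1$ convert the differential inequality into an algebraic quadratic inequality in $F(x_0,t_0)$. Solving this quadratic and using $\psi\leq 1$ on $B(O,1/2)$ delivers the bound (\ref{f1}) with the constant $C_2$ in the form stated.

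The main obstacle I anticipate is bookkeeping: the numerous cross terms produced by differentiating $qB(f)$ and $af^b$ must be grouped so that (i) the $F^2$ coefficient keeps the factor $(2-\epsilon)\bar J/(n\alpha^2)$, and (ii) every remaining term is either linear in $F$ (absorbed via Young's inequality, producing the $\sqrt{\cdot}$ in (\ref{f1})) or constant (contributing to $C_2$). A secondary technical point is the dichotomy handling of the $\bar J\,F^2$ coefficient: one must verify using (A3)--(A4) that $\frac{(2-\epsilon)J}{n\alpha^2}(\bar J-\alpha)^2 - \epsilon > 0$ in the relevant regime, which is exactly the denominator appearing in $C_2$. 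Finally, the four specific choices of $(\alpha,\phi,\epsilon)$ listed (Li--Yau, Hamilton, Li--Xu, linear Li--Xu) are verified by direct substitution into (A1)--(A6); each case reduces to checking elementary inequalities involving $\sinh,\cosh,\coth$ or exponentials against $\bar J\in(0,1]$.
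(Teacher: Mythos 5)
Your high-level strategy is the same as the paper's: Bochner--Weitzenb\"ock, use the auxiliary $J$ from Lemma \ref{le} to absorb the Ricci term at cost $\epsilon|\nabla f|^4$ (giving the $(2-\epsilon)$ factors), expand $(\Delta f+\phi)^2$ \`a la Li--Yau, then cutoff and maximum principle. That part is right. But the specific auxiliary quantity you propose would not carry the argument through, and this is not merely a bookkeeping issue.

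The paper takes $F = J|\nabla f|^2 - \alpha f_t + a\alpha f^b + \alpha q B - \alpha\phi$, i.e.\ $J$ multiplies \emph{only} the gradient term, $f_t$ carries the coefficient $\alpha$ (without $J$), and the nonlinear source terms are bundled into $F$. This is engineered so that $\alpha\Delta f = (J-\alpha)|\nabla f|^2 - F - \alpha\phi$, which makes $(\Delta f+\phi)^2 = \frac{1}{\alpha^2}\left[F + (\alpha-J)|\nabla f|^2\right]^2$ a clean expression feeding directly into the $F^2$ coefficient $\frac{(2-\epsilon)J}{n\alpha^2}$ (not $\frac{2(2-\epsilon)\bar J}{n\alpha^2}$, as you wrote). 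Your $F = J(\alpha w - f_t) - \phi$ puts $J$ on $f_t$. When you then compute $(\Delta-\partial_t)(Jf_t)$ you get a term $(\Delta J - J_t) f_t$, and by the defining PDE for $J$ this equals $\bigl(2VJ + 5|\nabla J|^2/(\epsilon J)\bigr)f_t$, where $V = |Ric_-|$. This reintroduces a Ricci-type term that is \emph{not} of the form $V|\nabla f|^2$ and is not killed by the mechanism that absorbs the Bochner Ricci term, so the argument breaks. Furthermore, since you leave $a f^b$ and $q B$ outside $F$, your claimed identity $\Delta f = \alpha w - F/J - \phi/J$ is wrong: from (\ref{2}) and your $F$ one gets $\Delta f = (\alpha-1)w - (F+\phi)/J - a f^b - q B$, so the dangling source terms pollute the $(\Delta f)^2$ expansion and prevent the constants from coming out as stated.

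Two further points. The localization must be done with $\sqrt{t}\,\psi^2 F$ (the paper applies the maximum principle to $\sqrt{t}\psi^2 F$, with $\psi$ from Lemma \ref{l2}), not $\psi F$; without the explicit $t$-weight the maximum could occur at $t\to 0^+$ and one does not extract the $\frac{n\alpha^2}{2(2-\epsilon)\bar J}\cdot\frac{1}{t}$ term in (\ref{f1}). Finally, the sign condition $\frac{(2-\epsilon)J}{n\alpha^2}(\bar J - \alpha)^2 - \epsilon > 0$ in the denominator of $C_2$ does not follow from (A3)--(A4) as you suggest; the paper derives it from (A5)--(A6) via $\frac{\alpha}{\alpha-1}\le\lambda$ and $0<\epsilon\le\frac{2}{1+n\lambda^2}$ together with $J\le 1$.
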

As an application  of the global gradient estimates obtained in Corollary \ref{c1}, by integrating the gradient estimates in space-time we derive the following Harnack inequality. We first introduce the following notation.  Given $(y_{1},s_{1})\in M\times (0,T]$ and $(y_{2},s_{2})\in M\times (0,T]$ satisfying $s_{1}<s_{2}$, define
\begin{equation*}
\mathcal{J}(y_{1},s_{1},y_{2},s_{2})=\inf\int_{s_{1}}^{s_{2}}|{\zeta}'(t)|_{g(t)}^{2}dt,
\end{equation*}
and the infimum  is taken over the all smooth curves  $\zeta:[s_{1},s_{2}]\to M$ jointing $y_{1}$ and $y_{2}$.
\begin{corollary}\label{c2}
With the same assumptions in Theorem \ref{t1},  for $(y_{1},s_{1})\in  B(O,\frac{1}{2})\times (0,\infty)$ and $(y_{2},s_{2})\in B(O,\frac{1}{2})\times (0,\infty)$ such that $s_{1}<s_{2}$,    we have the following Harnack  inequalities.
\begin{itemize}
\item[1.]  For   Li-Yau type estimate,
 \begin{eqnarray}\nonumber
u(y_{1},s_{1})&\leq& u(y_{2},s_{2})(\frac{s_{2}}{s_{1}})^{ \frac{n\alpha^{2}}{2(2-\epsilon)  \bar{J}(T)}}\exp\left\{\frac{\alpha}{4\bar{J}(T)} \mathcal{J}(y_{1},s_{1},y_{2},s_{2}) \right.\\\label{c4}&&\left.+\left(C_{4}
+|a|\alpha D_{0}+\alpha \lambda_{1}\theta_{1}+\frac{n}{(2-\epsilon)\bar{J}(T)} \right)(s_{2}-s_{1})
\right\}.
\end{eqnarray}
Here,
\begin{eqnarray}\nonumber
C_{4}&=&  \frac{n\alpha^{2}}{(2-\epsilon)  \bar{J}(T)}\left[\frac{C_{1}^{2}n\alpha^{2}}{2(2-\epsilon)  (\alpha-1)\bar{J}(T)}+b\tilde{D}_{1}|a|+\theta_{1}\lambda_{2}+C_{1}\right]\\\nonumber&&+\sqrt{ \frac{n\alpha^{2}}{(2-\epsilon)  \bar{J}(T)}C_{2}}.
\end{eqnarray}
\item[2.]  For   Hamilton  type estimate,
\begin{eqnarray}\nonumber
u(y_{1},s_{1})&\leq& u(y_{2},s_{2})(\frac{s_{2}}{s_{1}})^{\frac{n\delta^{2} e^{2\kappa s_{2}}}{2(2-\epsilon)  \bar{J}(T)}}\exp\left\{\frac{\delta e^{\kappa s_{2}}}{4\bar{J}(T)}\mathcal{J}(y_{1},s_{1},y_{2},s_{2})\right.\\\label{c6}&&\left.
\left(C_{5}
+|a|\delta e^{\kappa s_{2}} D_{0}+\delta e^{\kappa s_{2}} \lambda_{1}\theta_{1}+\frac{\delta^{2}\kappa n e^{2\kappa s_{2}}}{2(2-\epsilon) \bar{J}(T)}\right)(s_{2}-s_{1})\right\},
\end{eqnarray}
where
\begin{eqnarray}\nonumber
C_{5}&=&  \frac{n\delta^{2} e^{2\kappa s_{2}}}{(2-\epsilon)  \bar{J}(T)}\left[\frac{C_{1}^{2}n\delta^{2} e^{2\kappa s_{2}}}{2(2-\epsilon)  (\delta e^{\kappa s_{1}}-1)\bar{J}(T)}+b\tilde{D}_{1}|a|+\theta_{1}\lambda_{2}+C_{1}\right]\\\nonumber&&+\sqrt{ \frac{n\delta^{2} e^{2\kappa s_{2}}}{(2-\epsilon)  \bar{J}(T)}C'_{5}},
\end{eqnarray}
and
\begin{eqnarray}\nonumber
C'_{5}&=&\frac{\left(|a|(\delta e^{\kappa s_{2}}-1)b\tilde{D}_{1}+5(\delta e^{\kappa s_{2}}-1)\theta_{1}\lambda_{2}+|a|\delta e^{\kappa s_{2}} b(b-1)\tilde{D}_{2}
+3(\delta e^{\kappa s_{2}}-1)\theta_{1}\lambda_{3}
\right)^{2}}{4\left[ \frac{(2-\epsilon)   \bar{J}}{n(\delta e^{\kappa s_{2}})^{2}}(\bar{J}-\delta e^{\kappa s_{1}})^{2}-\epsilon\right]}\\\nonumber&&
+
 \frac{1}{2\theta_{1}\lambda_{2}}\lambda_{1}\theta_{2}+|a|b\tilde{D}_{1}\delta e^{\kappa s_{2}} \phi
+\delta e^{\kappa s_{2}} \lambda_{1}\theta_{3}+\theta_{1}\lambda_{2}\delta e^{\kappa s_{2}}\phi+\frac{\delta e^{\kappa s_{2}}}{2\theta_{1}\lambda_{3}}\lambda_{2}\theta_{2}.
\end{eqnarray}
\item[3.] For  Li-Xu type estimate
\begin{eqnarray}\nonumber
u(y_{1},s_{1})&\leq& u(y_{2},s_{2})(\frac{s_{2}}{s_{1}})^{\frac{n\alpha^{2}(s_{2})}{2(2-\epsilon)  \bar{J}(T)}}\exp\left\{\frac{\alpha(s_{2})}{4\bar{J}(T)}
\mathcal{J}(y_{1},s_{1},y_{2},s_{2})
+\left(
C_{6}
+|a|\alpha(s_{2}) D_{0}\right.\right.\\\label{c8}&&\left.\left.+\alpha(s_{2}) \lambda_{1}\theta_{1}+\alpha(s_{2}) \frac{n\kappa}{(2-\epsilon)\bar{J}(T)}(\delta+\coth(\kappa s_{1}))
\right)
(s_{2}-s_{1})\right\},
\end{eqnarray}
where
\begin{eqnarray}\nonumber
C_{6}&=&  \frac{n\alpha^{2}(s_{2})}{(2-\epsilon)  \bar{J}(T)}\left[\frac{C_{1}^{2}n\alpha^{2}(s_{2})}{2(2-\epsilon)  (\alpha(s_{1})-1)\bar{J}(T)}+b\tilde{D}_{1}|a|+\theta_{1}\lambda_{2}+C_{1}\right]\\\nonumber&&+\sqrt{ \frac{n\alpha^{2}(s_{2})}{(2-\epsilon)  \bar{J}(T)}C'_{6}},
\end{eqnarray}
and
\begin{eqnarray}\nonumber
C'_{6}&=&\frac{\left(|a|(\alpha(s_{2})-1)b\tilde{D}_{1}+5(\alpha(s_{2})-1)\theta_{1}\lambda_{2}+|a|\alpha(s_{2}) b(b-1)\tilde{D}_{2}
+3(\alpha(s_{2})-1)\theta_{1}\lambda_{3}
\right)^{2}}{4\left[ \frac{(2-\epsilon)   \bar{J}}{n\alpha^{2}(s_{2})}(\bar{J}-\alpha(s_{1}))^{2}-\epsilon\right]}\\\nonumber&&
+
 \frac{1}{2\theta_{1}\lambda_{2}}\lambda_{1}\theta_{2}+|a|b\tilde{D}_{1}\alpha(s_{2}) \phi
+\alpha(s_{2}) \lambda_{1}\theta_{3}+\theta_{1}\lambda_{2}\alpha(s_{2})\phi+\frac{\alpha(s_{2})}{2\theta_{1}\lambda_{3}}\lambda_{2}\theta_{2}.
\end{eqnarray}
where $\alpha(s_{2})=\left(\delta+\frac{\sinh (\kappa s_{2})\cosh( \kappa s_{2}) -\kappa s_{2}}{\sinh(\kappa s_{2})}\right)$.
\item[4.] For  linear Li-Xu type gradient estimate,
\begin{eqnarray}\nonumber
u(y_{1},s_{1})&\leq& u(y_{2},s_{2})(\frac{s_{2}}{s_{1}})^{ \frac{n(\delta+\kappa s_{2})^{2}}{2(2-\epsilon)  \bar{J}(T)}}\exp\left\{\frac{\delta+\kappa s_{2}}{4\bar{J}(T)}
\mathcal{J}(y_{1},s_{1},y_{2},s_{2})
+\left(
C_{7}
+|a|\delta D_{0}\right.\right.\\\label{c10}&&\left.\left.+\delta \lambda_{1}\theta_{1}+\frac{n\kappa \delta}{(2-\epsilon)  \bar{J}(T)}
\right)
(s_{2}-s_{1})\right.\\\nonumber&&\left.+\frac{\kappa}{2}\left(|a|D_{0}+\lambda_{1}\theta_{1}+ \frac{n\kappa}{(2-\epsilon)  \bar{J}(T)}  \right)(s_{2}^{2}-s_{1}^{2})\right\},
\end{eqnarray}
\begin{eqnarray}\nonumber
C_{7}&=&  \frac{n(\delta+\kappa s_{2})^{2}}{(2-\epsilon)  \bar{J}(T)}\left[\frac{C_{1}^{2}n(\delta+\kappa s_{2})^{2}}{2(2-\epsilon)  (\delta+\kappa s_{1}-1)\bar{J}(T)}+b\tilde{D}_{1}|a|+\theta_{1}\lambda_{2}+C_{1}\right]\\\nonumber&&+\sqrt{ \frac{n(\delta+\kappa s_{2})^{2}}{(2-\epsilon)  \bar{J}(T)}C'_{7}}
\end{eqnarray}
and
\begin{eqnarray}\nonumber
C'_{7}&=&\frac{\left((\delta+\kappa s_{2}-1)\left( |a|b\tilde{D}_{1}+5\theta_{1}\lambda_{2}+3\theta_{1}\lambda_{3}\right)+|a|(\delta+\kappa s_{2}) b(b-1)\tilde{D}_{2}
\right)^{2}}{4\left[ \frac{(2-\epsilon)   \bar{J}}{n(\delta+\kappa s_{2})^{2}}(\bar{J}-\delta-\kappa s_{1})^{2}-\epsilon\right]}\\\nonumber&&
+
 \frac{1}{2\theta_{1}\lambda_{2}}\lambda_{1}\theta_{2}+|a|b\tilde{D}_{1}(\delta+\kappa s_{2}) \phi
+(\delta+\kappa s_{2}) \lambda_{1}\theta_{3}+\theta_{1}\lambda_{2}(\delta+\kappa s_{2})\phi+\frac{\delta+\kappa s_{2}}{2\theta_{1}\lambda_{3}}\lambda_{2}\theta_{2}.
\end{eqnarray}
\end{itemize}
\end{corollary}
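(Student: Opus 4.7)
The plan is the standard Li-Yau path-integration argument, transplanted into our setting where both the gradient coefficient $\alpha(t)$ and the curvature weight $\bar J(t)$ are time dependent. With $f=\log u$ as in (\ref{2}), I would first rearrange the gradient estimate (\ref{f1}) of Theorem~\ref{t1} into the schematic form
\begin{equation*}
\frac{\bar J(t)}{\alpha(t)}\,|\nabla f|^{2}(x,t)-f_{t}(x,t) \;\leq\; \frac{P(t)}{t}+R(t),
\end{equation*}
where $P(t)=\frac{n\alpha^{2}}{2(2-\epsilon)\bar J}$ is the coefficient of the $1/t$ term, and $R(t)$ collects the remaining slowly varying quantities on the right of (\ref{f1})---including the $\sqrt{\,\cdot\, C_{2}}$ contribution and the nonlinear $|a|,b,\theta_{i},\lambda_{j}$ terms---appropriately normalised so that the left-hand side carries $\tfrac{\bar J}{\alpha}|\nabla f|^{2}$.

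Next, fix any smooth curve $\zeta:[s_{1},s_{2}]\to B(O,\tfrac12)$ joining $y_{1}$ to $y_{2}$ and differentiate $f$ along it:
\[
\frac{d}{ds}f(\zeta(s),s) \;=\; f_{t}+\langle\nabla f,\zeta'(s)\rangle \;\geq\; \frac{\bar J}{\alpha}|\nabla f|^{2}+\langle\nabla f,\zeta'\rangle-\frac{P(s)}{s}-R(s).
\]
Cauchy-Schwarz followed by Young's inequality in the sharp form
\[
|\nabla f|\,|\zeta'| \;\leq\; \frac{\bar J}{\alpha}|\nabla f|^{2}+\frac{\alpha}{4\bar J}|\zeta'|^{2}
\]
then absorbs the quadratic $|\nabla f|^{2}$ contribution and produces
\[
\frac{d}{ds}f(\zeta(s),s) \;\geq\; -\frac{\alpha(s)}{4\bar J(s)}|\zeta'(s)|^{2}-\frac{P(s)}{s}-R(s).
\]
I would then integrate from $s_{1}$ to $s_{2}$, bound $\alpha(s)/\bar J(s)$ above by $\alpha(s_{2})/\bar J(T)$ (using that $\alpha$ is nondecreasing and $\bar J$ is nonincreasing by Lemma~\ref{le}), take the infimum over admissible $\zeta$ to produce $\mathcal J(y_{1},s_{1},y_{2},s_{2})$, and finally exponentiate $f=\log u$. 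This yields a Harnack-type inequality in which $\int P(s)/s\,ds$ generates the factor $(s_{2}/s_{1})^{P(s_{2})}$ and $\int R(s)\,ds$ gives the multiplicative exponential.

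It only remains to specialise to the four prescribed choices of $(\alpha,\phi)$. In the Li-Yau case $\alpha$ is constant, so every coefficient freezes and (\ref{c4}) drops out. In the Hamilton case $\alpha(t)=\delta e^{\kappa t}$ introduces the $e^{\kappa s_{2}}$ factors, while the $e^{\kappa s_{1}}$ hidden inside $C_{5}$ comes from lower bounding $\alpha(s)-1$ by $\delta e^{\kappa s_{1}}-1$ on $[s_{1},s_{2}]$. In the Li-Xu case, $\coth(\kappa s_{1})$ appears because the $\phi$-term must be evaluated at the left endpoint when bounding $R(s)$ uniformly from above. In the linear Li-Xu case, $\alpha(t)=\delta+\kappa t$ makes $R(t)$ itself linear in $t$, and integrating the linear piece yields the extra $\tfrac{\kappa}{2}(s_{2}^{2}-s_{1}^{2})$ contribution visible in (\ref{c10}). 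The principal obstacle is therefore not analytic but bookkeeping: routing each $\tilde D_{i}$, $\lambda_{j}$, $\theta_{k}$ contribution from the nonlinear source of (\ref{1}) through the integration so that the explicit constants $C_{4},C_{5},C_{6},C_{7}$ (and $C_{5}',C_{6}',C_{7}'$) match the statement verbatim.
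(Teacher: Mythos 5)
Your outline is essentially the paper's own argument: rearrange the Theorem~\ref{t1} estimate, integrate $\frac{d}{dt}\log u(\zeta(t),t)=f_{t}+\langle\nabla f,\dot\zeta\rangle$ along a curve joining $(y_1,s_1)$ to $(y_2,s_2)$, complete the square to trade $-\frac{\bar J}{\alpha}|\nabla f|^{2}-\langle\nabla f,\dot\zeta\rangle$ for $\frac{\alpha}{4\bar J}|\dot\zeta|^{2}$, use monotonicity of $\bar J$ to freeze it at $\bar J(T)$, take the infimum over curves to produce $\mathcal J$, exponentiate, and then specialize $\alpha,\phi$ to the four cases (including the $\delta e^{\kappa s_1}-1$ lower bound in $C_5$ and the $\frac{\kappa}{2}(s_2^2-s_1^2)$ contribution from $\int(\delta+\kappa t)\,dt$ in the linear Li--Xu case). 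This is a correct proposal that tracks the paper's proof step for step.
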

\section{proofs of  results}
For prove our result, we need the following lemma.
\begin{lemma}Let $(M^{n},g)$ be a complete Riemannian manifold satisfies the hypotheses of Theorem \ref{t1}.
Let $J=J(x,t)$ be a positive smooth function, and $$F=J|\nabla f|^{2}-\alpha f_{t}+a\alpha f^{b}+\alpha q B-\alpha \phi$$
where $f_{t}=\partial_{t} f$. Then
\begin{eqnarray}\nonumber
(\Delta-\partial_{t})F&\geq&\frac{(2-\epsilon)  J}{n\alpha^{2}}F^{2}+\frac{2(2-\epsilon)  (\alpha-J)J}{n\alpha^{2}}F|\nabla f|^{2}\\\nonumber&&+\frac{(2-\epsilon)  J}{n\alpha^{2}}(J-\alpha)^{2}|\nabla f|^{4}
-2 \langle \nabla f,\nabla F \rangle
\\\nonumber&&-\epsilon J|\nabla f|^{4}
+2a(\alpha-J)bf^{b-1}|\nabla f|^{2}+2(\alpha-J)\langle \nabla f,\nabla(qB)\rangle\\\label{14}&&+a\alpha \Delta f^{b}+\alpha B\Delta q+\alpha q\Delta B+2\alpha \langle \nabla q,\nabla B\rangle.
\end{eqnarray}
\end{lemma}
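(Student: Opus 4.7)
The proof will be a Bochner-type computation of $(\Delta-\partial_t)F$, adapted to the auxiliary weight $J$ from Lemma~\ref{le}. The first move is to exploit \eqref{2} to rewrite $F$ in the more symmetric form
\[
F=(J-\alpha)|\nabla f|^2-\alpha\Delta f-\alpha\phi,
\]
which is obtained by substituting $f_t=\Delta f+|\nabla f|^2+af^b+qB$ into the definition of $F$. This identity is essential: it yields $-\alpha\Delta f-\alpha\phi=F+(\alpha-J)|\nabla f|^2$, and the three leading quadratic terms on the right of \eqref{14} are precisely $\tfrac{(2-\epsilon)J}{n\alpha^{2}}$ times the square of this combination.

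Next I would compute $(\Delta-\partial_t)F$ summand by summand starting from the original definition. On $J|\nabla f|^{2}$, Leibniz together with Bochner's formula produces
\[
(\Delta-\partial_t)(J|\nabla f|^2)=(\Delta J-J_t)|\nabla f|^2+2J|\nabla^2 f|^2+2J\langle\nabla f,\nabla\Delta f\rangle+2J\,\mathrm{Ric}(\nabla f,\nabla f)-2J\langle\nabla f,\nabla f_t\rangle+2\langle\nabla J,\nabla|\nabla f|^2\rangle.
\]
Lemma~\ref{le} replaces $\Delta J-J_t$ by $2VJ+5|\nabla J|^{2}/(\epsilon J)$, and the $2VJ|\nabla f|^{2}$ contribution is exactly what is needed to absorb $2J\,\mathrm{Ric}(\nabla f,\nabla f)\geq-2J\,Ric_{-}|\nabla f|^{2}$. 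The mixed term $2\langle\nabla J,\nabla|\nabla f|^{2}\rangle$ is split by Young's inequality into a piece $-5|\nabla J|^{2}|\nabla f|^{2}/(\epsilon J)$ that cancels the remaining $J$-gradient contribution, plus a residual Hessian piece that consumes a fraction $\epsilon$ of the bound $|\nabla^{2}f|^{2}\geq(\Delta f)^{2}/n$. Combining, the Hessian estimate yields $\tfrac{(2-\epsilon)J}{n}(\Delta f)^{2}$, with the leftover $-\epsilon J|\nabla f|^{4}$ penalty appearing after substituting the $f$-equation into the residue.

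For the remaining summands $-\alpha f_t$, $a\alpha f^{b}$, $\alpha qB$, $-\alpha\phi$, I would use $[\Delta,\partial_t]=0$ (fixed metric) together with the product rule $\Delta(qB)=B\Delta q+q\Delta B+2\langle\nabla q,\nabla B\rangle$. The cross terms from $(\Delta-\partial_t)(-\alpha f_t)$ and $-2J\langle\nabla f,\nabla f_t\rangle$ above are re-expressed via the $f$-equation \eqref{2}, which manufactures the contributions $2a(\alpha-J)bf^{b-1}|\nabla f|^{2}$ and $2(\alpha-J)\langle\nabla f,\nabla(qB)\rangle$ that appear explicitly in \eqref{14}. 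The transport term $-2\langle\nabla f,\nabla F\rangle$ is then obtained by grouping the $2J\langle\nabla f,\nabla\Delta f\rangle$ piece from Bochner and using $\nabla F=\nabla[(J-\alpha)|\nabla f|^{2}]-\alpha\nabla\Delta f-\alpha\nabla\phi$ to trade $\alpha\nabla\Delta f$ for $\nabla F$.

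The main obstacle will be careful bookkeeping of the Young splits, so that the $|\nabla J|^{2}/J$ contributions cancel \emph{exactly} against the budget provided by Lemma~\ref{le} and a uniform factor $(2-\epsilon)$ is inherited by all three quadratic terms. Once this is done, the identity $\alpha\Delta f=(J-\alpha)|\nabla f|^{2}-F-\alpha\phi$ converts the Hessian lower bound into the announced perfect-square quadratic form in $F$ and $|\nabla f|^{2}$, while the last four pieces $a\alpha\Delta f^{b}$, $\alpha B\Delta q$, $\alpha q\Delta B$ and $2\alpha\langle\nabla q,\nabla B\rangle$ appear unmodified from the Leibniz expansion of the nonlinear summands of $F$, completing the derivation of \eqref{14}.
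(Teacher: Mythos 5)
Your overall plan --- Bochner's formula, the ODE for $J$ from Lemma~\ref{le} to absorb the Ricci term, Young/Cauchy splits for the $\nabla J$ cross terms, and the square completion via $\alpha(\Delta f+\phi)=-F-(\alpha-J)|\nabla f|^{2}$ --- is the same strategy as the paper's, and it can be made to work. However, two of your intermediate mechanisms are not correct as stated.

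First, the transport term. You propose to manufacture $-2\langle\nabla f,\nabla F\rangle$ by trading $\alpha\nabla\Delta f$ for $\nabla F$ inside the Bochner piece $2J\langle\nabla f,\nabla\Delta f\rangle$. But that substitution only gives $-\tfrac{2J}{\alpha}\langle\nabla f,\nabla F\rangle$, not $-2\langle\nabla f,\nabla F\rangle$. In fact the coefficient $2$ does not come from a single term: after combining $\Delta F$ and $F_{t}$, the Bochner cross-term $2J\langle\nabla f,\nabla\Delta f\rangle$ (rewritten via \eqref{2}) must be grouped together with the time-derivative cross-terms $-2J\langle\nabla f,\nabla f_{t}\rangle$ and $+2\alpha\langle\nabla f,\nabla f_{t}\rangle$ coming from $\partial_{t}(J|\nabla f|^{2})$ and the time-differentiation of \eqref{2} applied to $-\alpha f_{t}$. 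Only then does the bracket assemble into $-2\langle\nabla f,\nabla F\rangle$, and the cost of that reassembly is a residual cubic term $2|\nabla f|^{2}\langle\nabla f,\nabla J\rangle$ together with the wanted pieces $2a(\alpha-J)bf^{b-1}|\nabla f|^{2}$ and $2(\alpha-J)\langle\nabla f,\nabla(qB)\rangle$. Your derivation does not produce that cubic term at all, and without it the budget below cannot close.

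Second, the $\nabla J$ budget. You claim that one Young split of $2\langle\nabla J,\nabla|\nabla f|^{2}\rangle$ simultaneously yields the full $-5|\nabla J|^{2}|\nabla f|^{2}/(\epsilon J)$ contribution and a Hessian residue, and that the penalty $-\epsilon J|\nabla f|^{4}$ is a by-product of substituting the $f$-equation. Neither is right. A single Young split gives $2\langle\nabla J,\nabla|\nabla f|^{2}\rangle\geq-\epsilon J|\mathrm{Hess}\,f|^{2}-\tfrac{4}{\epsilon J}|\nabla J|^{2}|\nabla f|^{2}$, i.e.\ only a $-4/\epsilon$ coefficient. The remaining $-\tfrac{1}{\epsilon J}|\nabla J|^{2}|\nabla f|^{2}$ needed to match the $-5|\nabla J|^{2}/(\epsilon J)$ term in Lemma~\ref{le}'s equation, \emph{and} the quartic penalty $-\epsilon J|\nabla f|^{4}$, both come from a second, separate Young split applied to the cubic term $2|\nabla f|^{2}\langle\nabla f,\nabla J\rangle$ that you have not tracked. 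So your bookkeeping does not actually close, and the $-\epsilon J|\nabla f|^{4}$ term in \eqref{14} remains unaccounted for in your sketch. Fixing both points puts you back on exactly the paper's track: after the two Young splits the coefficient of $|\nabla f|^{2}$ becomes $\Delta J-\tfrac{5|\nabla J|^{2}}{\epsilon J}-J_{t}-2VJ$, which vanishes by Lemma~\ref{le}, and the Hessian bound $|\mathrm{Hess}\,f+\tfrac{\phi}{n}g|^{2}\geq\tfrac{1}{n}(\Delta f+\phi)^{2}$ combined with $\alpha(\Delta f+\phi)=-F-(\alpha-J)|\nabla f|^{2}$ delivers the quadratic form in $F$ and $|\nabla f|^{2}$.
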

\begin{proof}
By the Bochner-Weitzenbb\"{o}ck formula
$$\frac{1}{2}\Delta |\nabla f|^{2}=|{\rm Hess}f|^{2}+\langle \nabla f,\nabla\Delta f\rangle+Ric(\nabla f,\nabla f)$$
 we have
\begin{eqnarray}\nonumber
\Delta F&=&2 J|{\rm Hess}f|^{2}+2J\langle \nabla f,\nabla\Delta f\rangle+2JRic(\nabla f,\nabla f)+ |\nabla f|^{2}\Delta J\\\nonumber&&+2\langle \nabla J,\nabla |\nabla f|^{2}\rangle -\alpha \Delta f_{t}+a\alpha \Delta f^{b}+\alpha B\Delta q+\alpha q\Delta B\\\label{3}&&+2\alpha \langle \nabla q,\nabla B\rangle.
\end{eqnarray}
Applying  (\ref{2}) in (\ref{3}) we get
\begin{eqnarray}\nonumber
\Delta F&=&2 J|{\rm Hess}f|^{2}+2J\langle \nabla f,\nabla\left(f_{t}-|\nabla f|^{2}-af^{b}-qB \right)\rangle\\\nonumber&&+2JRic(\nabla f,\nabla f)+ |\nabla f|^{2}\Delta J+2\langle \nabla J,\nabla |\nabla f|^{2}\rangle -\alpha \Delta f_{t}\\\label{4}&&+a\alpha \Delta f^{b}+\alpha B\Delta q+\alpha q\Delta B+2\alpha \langle \nabla q,\nabla B\rangle.
\end{eqnarray}
Also, we have
\begin{eqnarray}\nonumber
F_{t}&=&J_{t}|\nabla f|^{2}+2J\langle \nabla f,\nabla f_{t}\rangle-\alpha f_{tt}-\alpha' f_{t}+a\alpha' f^{b}+ab\alpha f^{b-1}f_{t}\\\label{5}&&
+\alpha'qB+\alpha q_{t} B+\alpha q B_{t}-\alpha' \phi-\alpha \phi'\\\nonumber&=&
J_{t}|\nabla f|^{2}+2J\langle \nabla f,\nabla f_{t}\rangle-\alpha \Delta f_{t}-2\alpha \langle \nabla f,\nabla f_{t}\rangle\\\nonumber&&-\alpha' f_{t}+a\alpha' f^{b}
+\alpha'qB-\alpha' \phi-\alpha \phi'.
\end{eqnarray}
Combining (\ref{4}) and (\ref{5}), we infer
\begin{eqnarray}\nonumber
(\Delta-\partial_{t})F&=&2 J|{\rm Hess}f|^{2}+2JRic(\nabla f,\nabla f)\\\nonumber&&
-2 \langle \nabla f,J\nabla |\nabla f|^{2}+|\nabla f|^{2}\nabla J-\alpha \nabla f_{t}+a\alpha\nabla f^{b}+\alpha \nabla(qB)-\nabla(\alpha \phi) \rangle\\\nonumber&&
+2a(\alpha-J)bf^{b-1}|\nabla f|^{2}+2(\alpha-J)\langle \nabla f,\nabla(qB)\rangle+2|\nabla f|^{2}\langle \nabla J, \nabla f\rangle\\\nonumber&&+ |\nabla f|^{2}\Delta J+a\alpha \Delta f^{b}+\alpha B\Delta q+\alpha q\Delta B+2\alpha \langle \nabla q,\nabla B\rangle\\\nonumber&&+2\langle \nabla J,\nabla |\nabla f|^{2}\rangle -J_{t}|\nabla f|^{2}+\alpha' f_{t}-a\alpha' f^{b}
-\alpha'qB+\alpha' \phi+\alpha \phi'\\\nonumber&=&
2 J|{\rm Hess}f|^{2}+2JRic(\nabla f,\nabla f)
-2 \langle \nabla f,\nabla F \rangle\\\nonumber&&
+2a(\alpha-J)bf^{b-1}|\nabla f|^{2}+2(\alpha-J)\langle \nabla f,\nabla(qB)\rangle+2|\nabla f|^{2}\langle \nabla J, \nabla f\rangle\\\nonumber&&+ |\nabla f|^{2}\Delta J+a\alpha \Delta f^{b}+\alpha B\Delta q+\alpha q\Delta B+2\alpha \langle \nabla q,\nabla B\rangle\\\label{6}&&+2\langle \nabla J,\nabla |\nabla f|^{2}\rangle -J_{t}|\nabla f|^{2}+\alpha' f_{t}-a\alpha' f^{b}
-\alpha'qB+\alpha' \phi+\alpha \phi'.
\end{eqnarray}
For any real number $\epsilon>0$, by Cauchy inequality we have
\begin{equation}\label{7}
2\langle \nabla J,\nabla |\nabla f|^{2}\rangle\geq -\epsilon J|{\rm Hess}f|^{2}-\frac{4|\nabla J|^{2}}{\epsilon J}|\nabla f|^{2}
\end{equation}
and
\begin{equation}\label{8}
2|\nabla f|^{2}\langle \nabla J, \nabla f\rangle\geq -\epsilon J|\nabla f|^{4}-\frac{|\nabla J|^{2}}{\epsilon J}|\nabla f|^{2}.
\end{equation}
Substituting (\ref{7}) and (\ref{8}) into (\ref{6}), we obtain
\begin{eqnarray}\nonumber
(\Delta-\partial_{t})F&\geq&
(2-\epsilon)  J|{\rm Hess}f|^{2}
-2 \langle \nabla f,\nabla F \rangle\\\nonumber&&
+\left(  \Delta J-\frac{5|\nabla J|^{2}}{\epsilon J}-J_{t}-2VJ\right)|\nabla f|^{2}-\epsilon J|\nabla f|^{4}\\\nonumber&&
+2a(\alpha-J)bf^{b-1}|\nabla f|^{2}+2(\alpha-J)\langle \nabla f,\nabla(qB)\rangle\\\nonumber&&+a\alpha \Delta f^{b}+\alpha B\Delta q+\alpha q\Delta B+2\alpha \langle \nabla q,\nabla B\rangle\\\label{9}&& +\alpha' f_{t}-a\alpha' f^{b}
-\alpha'qB+\alpha' \phi+\alpha \phi',
\end{eqnarray}
where $V=|Ric_{-}|$.
From lemma \ref{le} we deduce that
\begin{eqnarray}\nonumber
(\Delta-\partial_{t})F&\geq&
(2-\epsilon)  J|{\rm Hess}f+\frac{\phi}{n}g|^{2}-2(2-\epsilon)J\frac{\phi}{n}\Delta f-(2-\epsilon)J\frac{\phi^{2}}{n}\\\nonumber&&
-2 \langle \nabla f,\nabla F \rangle
-\epsilon J|\nabla f|^{4}\\\nonumber&&
+2a(\alpha-J)bf^{b-1}|\nabla f|^{2}+2(\alpha-J)\langle \nabla f,\nabla(qB)\rangle\\\nonumber&&+a\alpha \Delta f^{b}+\alpha B\Delta q+\alpha q\Delta B+2\alpha \langle \nabla q,\nabla B\rangle\\\label{10}&& +\alpha' f_{t}-a\alpha' f^{b}
-\alpha'qB+\alpha' \phi+\alpha \phi'.
\end{eqnarray}
Plugging (\ref{2}) into (\ref{10}), we infer
\begin{eqnarray}\nonumber
(\Delta-\partial_{t})F&\geq&
(2-\epsilon)  J|{\rm Hess}f+\frac{\phi}{n}g|^{2}+2(2-\epsilon)J\frac{\phi}{n}|\nabla f|^{2}\\\nonumber&&-\left[2(2-\epsilon)\frac{\phi}{n}J-\alpha'\right]f_{t}+\left[2(2-\epsilon)\frac{\phi}{n}J-\alpha'\right](af^{b}+qB)\\\nonumber&&-\left[2(2-\epsilon)\frac{\phi}{n}J-\alpha'\right]\phi+(2-\epsilon)J\frac{\phi^{2}}{n}
-2 \langle \nabla f,\nabla F \rangle
-\epsilon J|\nabla f|^{4}\\\nonumber&&
+2a(\alpha-J)bf^{b-1}|\nabla f|^{2}+2(\alpha-J)\langle \nabla f,\nabla(qB)\rangle\\\label{10}&&+a\alpha \Delta f^{b}+\alpha B\Delta q+\alpha q\Delta B+2\alpha \langle \nabla q,\nabla B\rangle
+\alpha \phi'.
\end{eqnarray}
By conditions  $(A2)-(A4)$ we get
\begin{equation}
\begin{cases}
2(2-\epsilon)\frac{\phi}{n}\leq\left[2(2-\epsilon)\frac{\phi}{n}J-\alpha'\right]\frac{1}{\alpha},\\
2(2-\epsilon)\frac{\phi}{n}J-\alpha'>0,\\
\alpha \phi'+(2-\epsilon)J\frac{\phi^{2}}{n}.
\end{cases}
\end{equation}
Then
\begin{eqnarray}\nonumber
(\Delta-\partial_{t})F&\geq&
(2-\epsilon)  J|{\rm Hess}f+\frac{\phi}{n}g|^{2}\\\nonumber&&+\left[2(2-\epsilon)\frac{\phi}{n}J-\alpha'\right]\frac{F}{\alpha}
-2 \langle \nabla f,\nabla F \rangle
-\epsilon J|\nabla f|^{4}\\\nonumber&&
+2a(\alpha-J)bf^{b-1}|\nabla f|^{2}+2(\alpha-J)\langle \nabla f,\nabla(qB)\rangle\\\label{11}&&+a\alpha \Delta f^{b}+\alpha B\Delta q+\alpha q\Delta B+2\alpha \langle \nabla q,\nabla B\rangle.
\end{eqnarray}
On the other hand, Cauchy inequality yields
\begin{eqnarray}\nonumber
|{\rm Hess}f+\frac{\phi}{n}g|^{2}&\geq& \frac{1}{n}(\Delta f+\phi)^{2}=\frac{1}{n}(f_{t}-|\nabla f^{2}-af^{b}-qB+\phi)^{2}\\\label{12}&=&
\frac{1}{n\alpha^{2}}\left[ F+(\alpha-J)|\nabla f|^{2}\right]^{2}.
\end{eqnarray}
Hence
\begin{eqnarray}\nonumber
(\Delta-\partial_{t})F&\geq&\frac{(2-\epsilon)  J}{n\alpha^{2}}F^{2}+\frac{2(2-\epsilon)  (\alpha-J)J}{n\alpha^{2}}F|\nabla f|^{2}\\\nonumber&&+\frac{(2-\epsilon)  J}{n\alpha^{2}}(J-\alpha)^{2}|\nabla f|^{4}
+\left[2(2-\epsilon)\frac{\phi}{n}J-\alpha'\right]\frac{F}{\alpha}
-2 \langle \nabla f,\nabla F \rangle
\\\nonumber&&-\epsilon J|\nabla f|^{4}
+2a(\alpha-J)bf^{b-1}|\nabla f|^{2}+2(\alpha-J)\langle \nabla f,\nabla(qB)\rangle\\\label{13}&&+a\alpha \Delta f^{b}+\alpha B\Delta q+\alpha q\Delta B+2\alpha \langle \nabla q,\nabla B\rangle.
\end{eqnarray}
Therefore (\ref{13}) yields (\ref{14}).
\end{proof}
\begin{proof}[Proof of Theorem \ref{t1}]
According to Lemma \ref{l2} we choose  a cut-off function $\psi$  such that \begin{equation}\label{cf}\begin{cases}
0\leq \psi\leq1 \\supp \psi \subset\subset B(O,\frac{1}{2}), \\\psi\equiv1\quad \text{in}\quad B(O,\frac{1}{2}),\\ |\nabla \psi|^{2}+|\Delta \psi|\leq C(p,n)=C.
\end{cases}
\end{equation}
Multiplying both sides of inequality (\ref{14}) with $t\psi^{4}$, we conclude
\begin{eqnarray}\nonumber
&&\sqrt{t}\psi^{2}(\Delta-\partial_{t})(\sqrt{t}\psi^{2}F)\\\nonumber&=&t\psi^{4} (\Delta-\partial_{t})F+\sqrt{t}\psi^{2}\left(
\sqrt{t}F\Delta \psi^{2}+\sqrt{t}\langle\nabla F,\nabla\psi^{2}\rangle- \frac{1}{2\sqrt{t}}\psi^{2}F
\right)\\\nonumber
&\geq&\frac{(2-\epsilon)  J}{n\alpha^{2}}(\sqrt{t}\psi^{2}F)^{2}+\frac{2(2-\epsilon)  (\alpha-J)J}{n\alpha^{2}}t\psi^{4}F|\nabla f|^{2}\\\nonumber&&+\frac{(2-\epsilon)  J}{n\alpha^{2}}(J-\alpha)^{2}t\psi^{4}|\nabla f|^{4}
-2t\psi^{4} \langle \nabla f,\nabla F \rangle
\\\nonumber&&-\epsilon t\psi^{4}  J|\nabla f|^{4}
+2at\psi^{4}(\alpha-J)bf^{b-1}|\nabla f|^{2}+2t\psi^{4}(\alpha-J)\langle \nabla f,\nabla(qB)\rangle\\\label{15}&&+at\psi^{4}\alpha \Delta f^{b}+\alpha t\psi^{4} B\Delta q+t\psi^{4}\alpha q\Delta B+2t\psi^{4}\alpha \langle \nabla q,\nabla B\rangle\\\nonumber&&+
\psi^{2}\left(
{t}F\Delta \psi^{2}+{t}\langle\nabla F,\nabla\psi^{2}\rangle- \frac{1}{2}\psi^{2}F
\right).
\end{eqnarray}
   Since
   $$\alpha \Delta f=\alpha ( f_{t}-|\nabla f|^{2}-af^{b}-qB)=-F-(\alpha-J)|\nabla f|^{2}-\alpha \phi$$
    we have
    \begin{eqnarray}\nonumber
 \alpha \Delta f^{b}&=&\alpha\left(b(b-1)f^{b-2}|\nabla f|^{2}+bf^{b-1}\Delta f\right)\\\label{16}&=&
 \alpha b(b-1)f^{b-2}|\nabla f|^{2}-bf^{b-1}F-bf^{b-1}(\alpha-J)|\nabla f|^{2}-bf^{b-1}\alpha \phi,
    \end{eqnarray}
    and
        \begin{eqnarray}\nonumber
 \alpha \Delta B&=&\alpha\left( B_{ff}|\nabla f|^{2}+B_{f}\Delta f\right)\\\nonumber&=&
 \alpha B_{ff}|\nabla f|^{2}+B_{f}\left(-F-(\alpha-J)|\nabla f|^{2}-\alpha \phi\right)
 \\\label{17}&\geq&
  -\alpha\lambda_{3}|\nabla f|^{2}+\lambda_{2}\left(-F-(\alpha-J)|\nabla f|^{2}-\alpha \phi\right).
    \end{eqnarray}
   For any $\delta_{1}>0$  and  $\delta_{2}>0$ we have
    \begin{equation}\label{18}
     \langle \nabla q,\nabla B\rangle= B_{f}\langle \nabla q,\nabla f\rangle\geq - \frac{1}{4\delta_{1}}\lambda_{2}\theta_{2} -\delta_{1} |\nabla f|^{2}
    \end{equation}
and
        \begin{equation}\label{19}
     \langle \nabla f,\nabla (qB)\rangle= qB_{f}|\nabla f|^{2}+B\langle \nabla f,\nabla q\rangle\geq-\theta_{1}\lambda_{2}|\nabla f|^{2} - \frac{1}{4\delta_{2}}\lambda_{1}\theta_{2} -\delta_{2} |\nabla f|^{2}.
    \end{equation}
    Applying (\ref{16})-(\ref{19}) into (\ref{15}), we get
    \begin{eqnarray}\nonumber
&&\sqrt{t}\psi^{2}(\Delta-\partial_{t})(\sqrt{t}\psi^{2}F)\\\nonumber
&\geq&\frac{(2-\epsilon)  J}{n\alpha^{2}}(\sqrt{t}\psi^{2}F)^{2}+\frac{2(2-\epsilon)  (\alpha-J)J}{n\alpha^{2}}t\psi^{4}F|\nabla f|^{2}-2t\psi^{4} \langle \nabla f,\nabla F \rangle\\\nonumber&&+\left[ \frac{(2-\epsilon)  J}{n\alpha^{2}}(J-\alpha)^{2}-\epsilon\right]t\psi^{4}|\nabla f|^{4}
+2at\psi^{4}(\alpha-J)bf^{b-1}|\nabla f|^{2}\\\nonumber&&+2t\psi^{4}(\alpha-J)\left( -\theta_{1}\lambda_{2}|\nabla f|^{2} - \frac{1}{4\delta_{1}}\lambda_{1}\theta_{2} -\delta_{2} |\nabla f|^{2}\right)\\\nonumber&&
\\\label{20}&&
+at\psi^{4}\left(
 \alpha b(b-1)f^{b-2}|\nabla f|^{2}-bf^{b-1}F-bf^{b-1}(\alpha-J)|\nabla f|^{2}-bf^{b-1}\alpha \phi
\right)\\\nonumber&&-\alpha t\psi^{4} \lambda_{1}\theta_{3}+t\psi^{4}\theta_{1} \left(   -\alpha\lambda_{3}|\nabla f|^{2}+\lambda_{2}\left(-F-(\alpha-J)|\nabla f|^{2}-\alpha \phi\right)\right)\\\nonumber&&+2t\psi^{4}\alpha \left(  - \frac{1}{4\delta_{2}}\lambda_{2}\theta_{2} -\delta_{2} |\nabla f|^{2}\right)+
\psi^{2}\left(
{t}F\Delta \psi^{2}+{t}\langle\nabla F,\nabla\psi^{2}\rangle- \frac{1}{2}\psi^{2}F
\right).
\end{eqnarray}
     For any $T>0$,  suppose that $(x_{1},t_{1})$ is a maximum point of $\sqrt{t}\psi^{2} F$ in $B(O,1)\times(0,T]$. We can assume that  the value is positive, because otherwise   the proof is trivial. At this point, since $\nabla(\psi^{2}F)=0$, we have $\psi\nabla F=-2F\nabla \psi$. Therefore,
         \begin{eqnarray}\nonumber
&&\sqrt{t}\psi^{2}(\Delta-\partial_{t})(\sqrt{t}\psi^{2}F)\\\nonumber
&\geq&\frac{(2-\epsilon)  J}{n\alpha^{2}}(\sqrt{t}\psi^{2}F)^{2}+\frac{2(2-\epsilon)  (\alpha-J)J}{n\alpha^{2}}t\psi^{4}F|\nabla f|^{2}+4t\psi^{4} F\langle \nabla f,\nabla \psi \rangle\\\nonumber&&+\left[ \frac{(2-\epsilon)  J}{n\alpha^{2}}(J-\alpha)^{2}-\epsilon\right]t\psi^{4}|\nabla f|^{4}
+2at\psi^{4}(\alpha-J)bf^{b-1}|\nabla f|^{2}\\\nonumber&&+2t\psi^{4}(\alpha-J)\left( -\theta_{1}\lambda_{2}|\nabla f|^{2} - \frac{1}{4\delta_{1}}\lambda_{1}\theta_{2} -\delta_{1} |\nabla f|^{2}\right)\\\nonumber&&
\\\label{21}&&
+at\psi^{4}\left(
 \alpha b(b-1)f^{b-2}|\nabla f|^{2}-bf^{b-1}F-bf^{b-1}(\alpha-J)|\nabla f|^{2}-bf^{b-1}\alpha \phi
\right)\\\nonumber&&-\alpha t\psi^{4} \lambda_{1}\theta_{3}+t\psi^{4}\theta_{1} \left(   -\alpha\lambda_{3}|\nabla f|^{2}+\lambda_{2}\left(-F-(\alpha-J)|\nabla f|^{2}-\alpha \phi\right)\right)\\\nonumber&&+2t\psi^{4}\alpha \left(  - \frac{1}{4\delta_{2}}\lambda_{2}\theta_{2} -\delta_{2} |\nabla f|^{2}\right)+
2t\psi^{3}
F\Delta \psi-6{t}\psi^{2}F |\nabla \psi|^{2}- \frac{1}{2}\psi^{4}F.
\end{eqnarray}
    Choosing $0<\epsilon< \frac{2}{1+n\lambda^{2}}$ and  by using $\frac{\alpha}{\alpha-1}\leq \lambda $, we have $\frac{2-\epsilon}{n\alpha^{2}}(1-\alpha)^{2}-\epsilon>0$. Since $J\leq1$, we conclude $$\frac{(2-\epsilon)  J}{n\alpha^{2}}(J-\alpha)^{2}-\epsilon>0.$$
 Applying (\ref{cf}) in (\ref{21}), there exists a constant $C_{1}$ such that
        \begin{eqnarray}\nonumber
&&\sqrt{t}\psi^{2}(\Delta-\partial_{t})(\sqrt{t}\psi^{2}F)\\\nonumber
&\geq&\frac{(2-\epsilon)  J}{n\alpha^{2}}(\sqrt{t}\psi^{2}F)^{2}+\frac{2(2-\epsilon)  (\alpha-J)J}{n\alpha^{2}}t\psi^{4}F|\nabla f|^{2}-C_{1}t\psi^{4} F| \nabla f| \\\nonumber&&+\left[ \frac{(2-\epsilon)  J}{n\alpha^{2}}(J-\alpha)^{2}-\epsilon\right]t\psi^{4}|\nabla f|^{4}
+2at\psi^{4}(\alpha-J)bf^{b-1}|\nabla f|^{2}\\\nonumber&&+2t\psi^{4}(\alpha-J)\left( -\theta_{1}\lambda_{2}|\nabla f|^{2} - \frac{1}{4\delta_{1}}\lambda_{1}\theta_{2} -\delta_{1} |\nabla f|^{2}\right)\\\nonumber&&
\\\label{22}&&
+at\psi^{4}\left(
 \alpha b(b-1)f^{b-2}|\nabla f|^{2}-bf^{b-1}F-bf^{b-1}(\alpha-J)|\nabla f|^{2}-bf^{b-1}\alpha \phi
\right)\\\nonumber&&-\alpha t\psi^{4} \lambda_{1}\theta_{3}+t\psi^{4}\theta_{1} \left(   -\alpha\lambda_{3}|\nabla f|^{2}+\lambda_{2}\left(-F-(\alpha-J)|\nabla f|^{2}-\alpha \phi\right)\right)\\\nonumber&&+2t\psi^{4}\alpha \left(  - \frac{1}{4\delta_{2}}\lambda_{2}\theta_{2} -\delta_{2} |\nabla f|^{2}\right)-C_{1}{t}\psi^{2}F - \frac{1}{2}\psi^{4}F
.
\end{eqnarray}
We consider $\delta_{1}=\theta_{1}\lambda_{2}$ and $\delta_{2}=\theta_{1}\lambda_{3}$, then   we have
        \begin{eqnarray}\nonumber
&&\sqrt{t}\psi^{2}(\Delta-\partial_{t})(\sqrt{t}\psi^{2}F)\\\nonumber
&\geq&\frac{(2-\epsilon)  J}{n\alpha^{2}}(\sqrt{t}\psi^{2}F)^{2}+\frac{2(2-\epsilon)  (\alpha-J)J}{n\alpha^{2}}t\psi^{4}F|\nabla f|^{2}-C_{1}t\psi^{4} F| \nabla f| \\\nonumber&&+\left[ \frac{(2-\epsilon)  J}{n\alpha^{2}}(J-\alpha)^{2}-\epsilon\right]t\psi^{4}|\nabla f|^{4}
\\\nonumber&&-
t\psi^{4}\left(|a|(\alpha-J)b\tilde{D}_{1}+5(\alpha-J)\theta_{1}\lambda_{2}+|a|\alpha b(b-1)\tilde{D}_{2}
+3(\alpha-J)\theta_{1}\lambda_{3}
\right)|\nabla f|^{2}
\\\label{23}&&
+\left(-b\tilde{D}_{1}|a|t\psi^{4}-\theta_{1}\lambda_{2}t\psi^{4}
-C_{1}{t}\psi^{2}- \frac{1}{2}\psi^{4}
\right)F\\\nonumber&&-
 \frac{t\psi^{4}}{2\theta_{1}\lambda_{2}}\lambda_{1}\theta_{2}-|a|t\psi^{4}b\tilde{D}_{1}\alpha \phi
-\alpha t\psi^{4} \lambda_{1}\theta_{3}-t\psi^{4}\theta_{1}\lambda_{2}\alpha\phi-\frac{t\psi^{4}\alpha}{2\theta_{1}\lambda_{3}}\lambda_{2}\theta_{2}
.
\end{eqnarray}
Using inequality $Ax^{2}-Bx\geq -\frac{B^{2}}{4A}$, $(A,B>0)$, we have
\begin{eqnarray}\nonumber
\frac{2(2-\epsilon)  (\alpha-J)J}{n\alpha^{2}}t\psi^{4}F|\nabla f|^{2}-C_{1}t\psi^{4} F| \nabla f| &\geq &-\frac{C_{1}^{2}n\alpha^{2}}{2(2-\epsilon)  (\alpha-J){J}}t\psi^{4} F\\
&\geq &-\frac{C_{1}^{2}n\alpha^{2}}{2(2-\epsilon)  (\alpha-1)\bar{J}}t\psi^{4} F
\end{eqnarray}
and
\begin{eqnarray}\nonumber
&&\left[ \frac{(2-\epsilon)  J}{n\alpha^{2}}(J-\alpha)^{2}-\epsilon\right]t\psi^{4}|\nabla f|^{4}
\\\nonumber&&-
t\psi^{4}\left(|a|(\alpha-J)b\tilde{D}_{1}+5(\alpha-J)\theta_{1}\lambda_{2}+|a|\alpha b(b-1)\tilde{D}_{2}
+3(\alpha-J)\theta_{1}\lambda_{3}
\right)|\nabla f|^{2}\\\nonumber&&
\geq -\frac{\left(|a|(\alpha-J)b\tilde{D}_{1}+5(\alpha-J)\theta_{1}\lambda_{2}+|a|\alpha b(b-1)\tilde{D}_{2}
+3(\alpha-J)\theta_{1}\lambda_{3}
\right)^{2}}{4\left[ \frac{(2-\epsilon)  \bar{J}}{n\alpha^{2}}( \bar{J}-\alpha)^{2}-\epsilon\right]}t\psi^{4}.
\end{eqnarray}
Hence
        \begin{eqnarray}\nonumber
&&\sqrt{t}\psi^{2}(\Delta-\partial_{t})(\sqrt{t}\psi^{2}F)\\\nonumber
&\geq&\frac{(2-\epsilon)  J}{n\alpha^{2}}(\sqrt{t}\psi^{2}F)^{2}-\left[\frac{C_{1}^{2}n\alpha^{2}}{2(2-\epsilon)  (\alpha-1)\bar{J}}+b\tilde{D}_{1}|a|+\theta_{1}\lambda_{2}\right]t\psi^{4} F \\\nonumber&& -\frac{\left(|a|(\alpha-J)b\tilde{D}_{1}+5(\alpha-J)\theta_{1}\lambda_{2}+|a|\alpha b(b-1)\tilde{D}_{2}
+3(\alpha-J)\theta_{1}\lambda_{3}
\right)^{2}}{4\left[ \frac{(2-\epsilon)  \bar{J}}{n\alpha^{2}}( \bar{J}-\alpha)^{2}-\epsilon\right]}t\psi^{4}
\\\label{23}&&
-\left(
C_{1}{t}\psi^{2}+ \frac{1}{2}\psi^{4}
\right)F\\\nonumber&&-
 \frac{t\psi^{4}}{2\theta_{1}\lambda_{2}}\lambda_{1}\theta_{2}-|a|t\psi^{4}b\tilde{D}_{1}\alpha \phi
-\alpha t\psi^{4} \lambda_{1}\theta_{3}-t\psi^{4}\theta_{1}\lambda_{2}\alpha\phi-\frac{t\psi^{4}\alpha}{2\theta_{1}\lambda_{3}}\lambda_{2}\theta_{2}
.
\end{eqnarray}
By max principle, we infer
        \begin{eqnarray}\nonumber
0&\geq&\frac{(2-\epsilon)  J}{n\alpha^{2}}(\sqrt{t}\psi^{2}F)^{2}-\left[\frac{C_{1}^{2}n\alpha^{2}}{2(2-\epsilon)  (\alpha-1)\bar{J}}+b\tilde{D}_{1}|a|+\theta_{1}\lambda_{2}\right]t\psi^{4} F \\\nonumber&& -\frac{\left(|a|(\alpha-J)b\tilde{D}_{1}+5(\alpha-J)\theta_{1}\lambda_{2}+|a|\alpha b(b-1)\tilde{D}_{2}
+3(\alpha-J)\theta_{1}\lambda_{3}
\right)^{2}}{4\left[ \frac{(2-\epsilon)   \bar{J}}{n\alpha^{2}}( \bar{J}-\alpha)^{2}-\epsilon\right]}t\psi^{4}
\\\label{23}&&
-\left(
C_{1}{t}\psi^{2}+ \frac{1}{2}\psi^{4}
\right)F\\\nonumber&&-
 \frac{t\psi^{4}}{2\theta_{1}\lambda_{2}}\lambda_{1}\theta_{2}-|a|t\psi^{4}b\tilde{D}_{1}\alpha \phi
-\alpha t\psi^{4} \lambda_{1}\theta_{3}-t\psi^{4}\theta_{1}\lambda_{2}\alpha\phi-\frac{t\psi^{4}\alpha}{2\theta_{1}\lambda_{3}}\lambda_{2}\theta_{2}
.
\end{eqnarray}
So
\begin{eqnarray}\nonumber
F&\leq& \frac{n\alpha^{2}}{2(2-\epsilon)  \bar{J}}\frac{1}{t}+ \frac{n\alpha^{2}}{(2-\epsilon)  \bar{J}}\left[\frac{C_{1}^{2}n\alpha^{2}}{2(2-\epsilon)  (\alpha-1)\bar{J}}+b\tilde{D}_{1}|a|+\theta_{1}\lambda_{2}+C_{1}\right]\\&&+\sqrt{ \frac{n\alpha^{2}}{(2-\epsilon)  \bar{J}}C_{2}}
\end{eqnarray} where
\begin{eqnarray}\nonumber
C_{2}&=&\frac{\left(|a|(\alpha-1)b\tilde{D}_{1}+5(\alpha-1)\theta_{1}\lambda_{2}+|a|\alpha b(b-1)\tilde{D}_{2}
+3(\alpha-1)\theta_{1}\lambda_{3}
\right)^{2}}{4\left[ \frac{(2-\epsilon)   \bar{J}}{n\alpha^{2}}(\bar{J}-\alpha)^{2}-\epsilon\right]}\\&&
+
 \frac{1}{2\theta_{1}\lambda_{2}}\lambda_{1}\theta_{2}+|a|b\tilde{D}_{1}\alpha \phi
+\alpha \lambda_{1}\theta_{3}+\theta_{1}\lambda_{2}\alpha\phi+\frac{\alpha}{2\theta_{1}\lambda_{3}}\lambda_{2}\theta_{2}.
\end{eqnarray}
To obtain the required result on $F(x,t)$ fo an appropriate range of $x\in M$, we get $\psi(x,t)=1$ whenever $d(x,O)<1$ and since $(x_{1},t_{1})$ is the maximum point of $\sqrt{t}\psi^{2}F$ in $B(O,1)\times(0,T]$, we get 
$$F(x,T)=\frac{\sqrt{T}\psi^{2}(x,T)F(x,T)}{\sqrt{T}\psi^{2}(x,T)}\leq \frac{\sqrt{t_{1}}\psi^{2}(x_{1},t_{1})F(x_{1},t_{1})}{\sqrt{T}\psi^{2}(x,T)}\leq F(x_{1},t_{1}).$$
Since $\bar{J}(t)$ is a decreasing function and $T$ is arbitrary, then inequality (\ref{f1}) holds.
 Note that
 \begin{itemize}
\item[(1 )] Li-Yau  type gradient estimate\\
Let
\begin{eqnarray*}
&&\alpha(t)= \text{constant}>1,\qquad \phi(t)=\frac{n}{(2-\epsilon)\alpha\bar{J}},\\
&&0<\epsilon\leq\frac{2(\alpha-1)^{2}}{(\alpha-1)^{2}+n\alpha^{2}}.
\end{eqnarray*}
By a direct calculation, we have
 \begin{equation*}
2(2-\epsilon)\frac{\phi}{n}\bar{J}-\alpha'=\frac{2}{\alpha}\geq0,
\end{equation*}
and
 \begin{equation*}
\alpha\phi'+(2-\epsilon)\frac{\phi{2}}{n}\bar{J}=\frac{n}{(2-\epsilon)\alpha^{2}\bar{J}}\geq0.
\end{equation*}
Also, $\alpha(t)>1$    and  $2(2-\epsilon)\frac{\phi}{n}\geq \left[2(2-\epsilon)\frac{\phi}{n}-\alpha'\right]\frac{1}{\alpha}$. Hence   $\alpha$ and $\phi$ in this case satisfy the conditions (A1)-(A6).
\item[(2)] Hamilton type gradient estimate\\
Let
\begin{eqnarray*}
&&\alpha(t)= \delta e^{\kappa t},\qquad \phi(t)=\frac{\delta \kappa  n e^{\kappa t}}{2(2-\epsilon)\bar{J}},\\
&&0<\epsilon\leq\frac{2(\delta-1)^{2}}{(\delta-1)^{2}+n\delta^{2}}\quad\text{for any}\quad \delta>1.
\end{eqnarray*}
Direct calculation shows
 \begin{equation*}
2(2-\epsilon)\frac{\phi}{n}\bar{J}-\alpha'=\delta \kappa e^{\kappa t}-\delta \kappa e^{\kappa t}=0
\end{equation*}
and
 \begin{equation*}
\alpha\phi'+(2-\epsilon)\frac{\phi{2}}{n}\bar{J}=\frac{3\delta^{2} \kappa^{2} n e^{2\kappa t}}{4(2-\epsilon)\bar{J}}\geq0.
\end{equation*}
We see that  $\alpha(t)>1$   and $2(2-\epsilon)\frac{\phi}{n}\geq \left[2(2-\epsilon)\frac{\phi}{n}-\alpha'\right]\frac{1}{\alpha}$. So, the functions   $\alpha$  and   $\phi$ in this case satisfy the conditions (A1)-(A6).
\item[(3)] Li-Xu type gradient estimate\\
Let
\begin{eqnarray*}
&&\alpha(t)= \delta+\frac{\sinh (\kappa t)\cosh(\kappa t)-\kappa t}{\sinh(\kappa t)},\quad\phi(t)=\frac{n\kappa }{(2-\epsilon)\bar{J}}(\delta+\coth (\kappa t)),\\
&&0<\epsilon<\frac{\delta}{\delta-1}\quad\text{for any}\quad \delta>1.
\end{eqnarray*}
We  have $\alpha'(t)\leq0$ then
 \begin{equation*}
 2(2-\epsilon)\frac{\phi}{n}\bar{J}-\alpha'\geq0
\end{equation*}
and
 \begin{eqnarray*}
\alpha\phi'+(2-\epsilon)\frac{\phi{2}}{n}\bar{J}\geq 0.
\end{eqnarray*}
 Thus, the functions   $\alpha$ and   $\phi$ in this case satisfy the conditions (A1)-(A6).
\item[(4)] Linear Li-Xu type gradient estimate\\
Let
\begin{eqnarray*}
&&\alpha(t)= \delta+\kappa t,\qquad \phi(t)=\frac{n\kappa }{2(2-\epsilon)\bar{J}},\\
&&0<\epsilon\leq\frac{2(\delta-1)^{2}}{(\delta-1)^{2}+n\delta^{2}}\quad\text{for any}\quad \delta>1.
\end{eqnarray*}
Then we get
 \begin{equation*}
2(2-\epsilon)\frac{\phi}{n}\bar{J}-\alpha'=0
\end{equation*}
and
 \begin{equation*}
\alpha\phi'+(2-\epsilon)\frac{\phi{2}}{n}\bar{J}=\frac{n \kappa^{2}}{4(2-\epsilon)\bar{J}}\geq0.
\end{equation*}
Therefore, the functions   $\alpha$ and   $\phi$  in this case satisfy the conditions (A1)-(A6).
\end{itemize}
\end{proof}
\begin{proof}[Proof of Corollary \ref{c2}]
Let  $\zeta(t)$ be a shortest  the geodesic joining  $y_{1}$ and  $y_{2}$ with $\zeta(s_{1})=y_{1}$ and $\zeta(s_{2})=y_{2}$. Now consider the path $(\zeta(t),t)$ in space-time. From Theorem  \ref{t1}, we have the following gradient estimate
\begin{eqnarray}\nonumber
J|\nabla \log u|^{2}-\alpha ( \log u)_{t}+a\alpha( \log u)^{b}+\alpha q B-\alpha \phi \leq \frac{n\alpha^{2}}{2(2-\epsilon)  \bar{J}}\frac{1}{t}+C_{3}
\end{eqnarray}
where
\begin{eqnarray}\nonumber
C_{3}&=&  \frac{n\alpha^{2}}{(2-\epsilon)  \bar{J}}\left[\frac{C_{1}^{2}n\alpha^{2}}{2(2-\epsilon)  (\alpha-1)\bar{J}}+b\tilde{D}_{1}|a|+\theta_{1}\lambda_{2}+C_{1}\right]+\sqrt{ \frac{n\alpha^{2}}{(2-\epsilon)  \bar{J}}C_{2}}.
\end{eqnarray}
Integrating this inequality along $\zeta$, we get
\begin{eqnarray}\nonumber
&&\log\frac{u(y_{1},s_{1})}{u(y_{2},s_{2})}\\\nonumber
&&=-\int_{s_{1}}^{s_{2}}\frac{d}{dt}\big(\log u(\zeta(t),t) \big)dt\\\nonumber
&&=-\int_{s_{1}}^{s_{2}}\Big(\partial_{t}(\log u)+\langle \nabla (\log u)(\zeta(t),t) ,\dot{\zeta}(t)\rangle \Big)dt\\\nonumber&&
\leq \int_{s_{1}}^{s_{2}}\left\{ -\frac{\bar{J}(t)}{\alpha} |\nabla (\log u)|^{2}+\frac{n\alpha^{2}}{2(2-\epsilon)  \bar{J}(t)}\frac{1}{t}+C_{3}
-a\alpha( \log u)^{b}-\alpha q B+\alpha \phi  \right.\\\nonumber&&\left.-\langle \nabla (\log u) ,\dot{\zeta}(t)\rangle\right\}dt\\\label{c1}&&
\leq\int_{s_{1}}^{s_{2}}\left\{ \frac{\alpha|\dot{\zeta}(t)|^{2}}{4\bar{J}(t)}+\frac{n\alpha^{2}}{2(2-\epsilon)  \bar{J}(t)}\frac{1}{t}+C_{3}
+|a|\alpha D_{0}+\alpha \lambda_{1}\theta_{1}+\alpha \phi \right\}dt,
\end{eqnarray}
where in the computation above  we have used  (\ref{e1c2}) to obtain the inequality in the third line and used inequality $-\tilde{a}x^{2}-\tilde{b}x\leq \frac{{\tilde b}^{2}}{4\tilde{a}}$ to arrive at   the inequality in the fourth line. Since $\bar{J}(t)$ is a decreasing function, we conclude
$$\max \frac{1}{\bar{J}(t)}\leq  \frac{1}{\bar{J}(T)}.$$
Then, inequality (\ref{c1}) becomes
\begin{eqnarray}\nonumber
\log\frac{u(y_{1},s_{1})}{u(y_{2},s_{2})}
&\leq&\frac{1}{4\bar{J}(T)}\int_{s_{1}}^{s_{2}}\alpha|\dot{\zeta}(t)|^{2} dt\\\label{c2}&&+
\int_{s_{1}}^{s_{2}}\left\{ \frac{n\alpha^{2}}{2(2-\epsilon)  \bar{J}(T)}\frac{1}{t}+C_{4}
+|a|\alpha D_{0}+\alpha \lambda_{1}\theta_{1}+\alpha \phi \right\}dt.
\end{eqnarray}
Here
\begin{eqnarray}\nonumber
C_{4}&=&  \frac{n\alpha^{2}}{(2-\epsilon)  \bar{J}(T)}\left[\frac{C_{1}^{2}n\alpha^{2}}{2(2-\epsilon)  (\alpha-1)\bar{J}(T)}+b\tilde{D}_{1}|a|+\theta_{1}\lambda_{2}+C_{1}\right]\\&&+\sqrt{ \frac{n\alpha^{2}}{(2-\epsilon)  \bar{J}(T)}C_{2}}.
\end{eqnarray}
Now in Li-Yau type estimate we have
\begin{eqnarray}\nonumber
\log\frac{u(y_{1},s_{1})}{u(y_{2},s_{2})}
&\leq&\frac{\alpha}{4\bar{J}(T)}\int_{s_{1}}^{s_{2}}|\dot{\zeta}(t)|^{2} dt+
 \frac{n\alpha^{2}}{2(2-\epsilon)  \bar{J}(T)}\log (\frac{s_{2}}{s_{1}})\\\label{c3}&&+\left(C_{4}
+|a|\alpha D_{0}+\alpha \lambda_{1}\theta_{1}+\frac{n}{(2-\epsilon)\bar{J}(T)} \right)(s_{2}-s_{1}).
\end{eqnarray}
This implies that inequality (\ref{c4}). \\
For Hamilton type estimate we have, $\delta e^{\kappa s_{1}}\leq\alpha(t)=\delta e^{\kappa t}\leq \delta e^{\kappa s_{2}}$. Then
\begin{eqnarray}\nonumber
\log\frac{u(y_{1},s_{1})}{u(y_{2},s_{2})}
&\leq&\frac{\delta e^{\kappa s_{2}}}{4\bar{J}(T)}\int_{s_{1}}^{s_{2}}|\dot{\zeta}(t)|^{2} dt
+\int_{s_{1}}^{s_{2}}\left\{ \frac{n\delta^{2} e^{2\kappa s_{2}}}{2(2-\epsilon)  \bar{J}(T)}\frac{1}{t}+C_{5}\right.\\\label{c5}&&\left.
+|a|\delta e^{\kappa s_{2}} D_{0}+\delta e^{\kappa s_{2}} \lambda_{1}\theta_{1}+\frac{\delta^{2}\kappa n e^{2\kappa s_{2}}}{2(2-\epsilon) \bar{J}(T)}\right\}dt,
\end{eqnarray}
where
\begin{eqnarray}\nonumber
C_{5}&=&  \frac{n\delta^{2} e^{2\kappa s_{2}}}{(2-\epsilon)  \bar{J}(T)}\left[\frac{C_{1}^{2}n\delta^{2} e^{2\kappa s_{2}}}{2(2-\epsilon)  (\delta e^{\kappa s_{1}}-1)\bar{J}(T)}+b\tilde{D}_{1}|a|+\theta_{1}\lambda_{2}+C_{1}\right]\\\nonumber&&+\sqrt{ \frac{n\delta^{2} e^{2\kappa s_{2}}}{(2-\epsilon)  \bar{J}(T)}C'_{5}}.
\end{eqnarray}
The inequality (\ref{c5}) implies that inequality (\ref{c6}). \\
In the Li-Xu type estimate we have $\alpha'(t)\geq0$, then $\alpha(t)\leq \alpha(s_{2})=\left(\delta+\frac{\sinh (\kappa s_{2})\cosh( \kappa s_{2}) -\kappa s_{2}}{\sinh(\kappa s_{2})}\right)$. Also, $\phi(t)\leq \frac{n\kappa}{(2-\epsilon)\bar{J}(T)}(\delta+\coth(\kappa s_{1}))$. Therefore, we obtain
\begin{eqnarray}\nonumber
\log\frac{u(y_{1},s_{1})}{u(y_{2},s_{2})}
&\leq&\frac{\alpha(s_{2})}{4\bar{J}(T)}\int_{s_{1}}^{s_{2}}|\dot{\zeta}(t)|^{2} dt\\\label{c7}&&+
\int_{s_{1}}^{s_{2}}\left\{ \frac{n\alpha^{2}(s_{2})}{2(2-\epsilon)  \bar{J}(T)}\frac{1}{t}+C_{6}
+|a|\alpha(s_{2}) D_{0}+\alpha(s_{2}) \lambda_{1}\theta_{1}\right.\\\nonumber&&\left.+\alpha(s_{2}) \frac{n\kappa}{(2-\epsilon)\bar{J}(T)}(\delta+\coth(\kappa s_{1})) \right\}dt,
\end{eqnarray}
where
\begin{eqnarray}\nonumber
C_{6}&=&  \frac{n\alpha^{2}(s_{2})}{(2-\epsilon)  \bar{J}(T)}\left[\frac{C_{1}^{2}n\alpha^{2}(s_{2})}{2(2-\epsilon)  (\alpha(s_{1})-1)\bar{J}(T)}+b\tilde{D}_{1}|a|+\theta_{1}\lambda_{2}+C_{1}\right]\\\nonumber&&+\sqrt{ \frac{n\alpha^{2}(s_{2})}{(2-\epsilon)  \bar{J}(T)}C'_{6}}.
\end{eqnarray}
Hence, we get (\ref{c8}).\\
In the linear Li-Xu type gradient estimate, we have
 \begin{eqnarray}\nonumber
\log\frac{u(y_{1},s_{1})}{u(y_{2},s_{2})}
&\leq&\frac{\delta+\kappa s_{2}}{4\bar{J}(T)}\int_{s_{1}}^{s_{2}}|\dot{\zeta}(t)|^{2} dt+
\int_{s_{1}}^{s_{2}}\left\{ \frac{n(\delta+\kappa s_{2})^{2}}{2(2-\epsilon)  \bar{J}(T)}\frac{1}{t}+C_{7}\right.\\\label{c9}&&\left.
+|a|(\delta+\kappa t) D_{0}+(\delta+\kappa t) \lambda_{1}\theta_{1}+\frac{n\kappa (\delta+\kappa t)}{(2-\epsilon)  \bar{J}(T)}  \right\}dt.
\end{eqnarray}
Here,
\begin{eqnarray}\nonumber
C_{7}&=&  \frac{n(\delta+\kappa s_{2})^{2}}{(2-\epsilon)  \bar{J}(T)}\left[\frac{C_{1}^{2}n(\delta+\kappa s_{2})^{2}}{2(2-\epsilon)  (\delta+\kappa s_{1}-1)\bar{J}(T)}+b\tilde{D}_{1}|a|+\theta_{1}\lambda_{2}+C_{1}\right]\\\nonumber&&+\sqrt{ \frac{n(\delta+\kappa s_{2})^{2}}{(2-\epsilon)  \bar{J}(T)}C'_{7}}.
\end{eqnarray}
Thus we infer (\ref{c10}).
\end{proof}

\end{document}